\documentclass[11pt]{article}
\usepackage{amsmath, amssymb, amsfonts, amsthm, amscd, vmargin}
\usepackage[latin1]{inputenc}
\usepackage[all]{xy}
\usepackage{graphicx,color}
\usepackage{enumerate}
\usepackage{tikz}
\usepackage{ragged2e}
\usepackage{multicol}
\usetikzlibrary{shapes}
\usepackage{dynkin-diagrams}
\tikzset{/Dynkin diagram, Bourbaki arrow}
\usepackage{appendix}
\setmarginsrb{2.4cm}{4.5cm}{2.4cm}{3.3cm}{0cm}{0cm}{0cm}{1.5cm}

\theoremstyle{plain}
\newtheorem{teo}{Theorem}
\newtheorem{lem}[teo]{Lemma}
\newtheorem{prop}[teo]{Proposition}

\theoremstyle{definition}
\newtheorem{defi}{Definition}
\newtheorem{ex}{Example}

\newtheorem{rem}{Remark}

\newtheorem{convention}{Convention}
\newtheorem{notation}{Notation}

\newcommand{\Cbb}{{\mathbb C}}
\newcommand{\Qbb}{{\mathbb Q}}

\newcommand{\Pbb}{{\mathbb P}}

\newcommand{\SL}{\operatorname{SL}}
\begin{document}

\title{Local rigidity of projective smooth horospherical varieties of Picard number two}
\author{Boris Pasquier, L\'ea Villeneuve}

\maketitle

\begin{abstract}
 We study the local rigidity of projective smooth horospherical varieties of rank one and Picard number two. These varieties have been already considered by the second author in a work where their automorphism groups are computed. The results given here are a natural continuation of this work.   
\end{abstract}

\tableofcontents

\section*{Introduction}

 We work over the field of complex numbers. Let $G$ be a connected reductive group. A normal $G$-variety $X$ is said to be horospherical if it contains an open $G$-orbit isomorphic to a torus bundle over a flag $G$-variety. Horospherical varieties are a well-known subfamily of spherical varieties, whose theory and fans that classify them, give useful tools to describe their geometry (see for example \cite{Perrin2018} for an introduction to spherical theory).

The first author studied the geometry of smooth projective horospherical varieties with Picard number $1$ first in \cite{Pasquier2009} and with N.~Perrin in \cite{Pasquier2010}. They (not homogeneous ones) are classified into 5 families (including odd symplectic grassmannians), they are all two-orbit varieties with not reductive automorphism group, and  they are all locally rigid except the $G_2$-variety which deforms into the orthogonal grassmannian of isotropic planes in $\Cbb^7$. These varieties of small Picard number give nice examples to study in several aspects of birational geometry.

Smooth projective horospherical varieties with Picard number $2$ are also classified \cite{Pasquier2020} into two categories and with a very big number of non homogeneous families. The second author computed \cite{Villeneuve2025} the automorphism group of these latter varieties in the case of horospherical varieties of rank $1$ (ie when the torus fiber of the open $G$-orbit is a one-dimensional). The restriction to the one-rank case still gives more than 750 families to study. And we can notice that we get two-orbit varieties with not reductive automorphism groups but also three-orbit varieties with reductive automorphism groups (and not necessarily connected). We can remark that this list of studied varieties, contains toric varieties (by restrincting the action to a subtorus of $G$), for example Hirzebruch varieties (see section~\ref{Hirzebruch}).\\

The aim of this paper is to study the local rigidity of all smooth projective rank one horospherical varieties $\mathbb{X}$ with Picard number $2$. 
We obtain the following result.
\begin{teo} The smooth projective rank one horospherical varieties with Picard number $2$ are all locally rigid, except in ``few '' cases in which $H^1(\mathbb{X},T_{\mathbb{X}})$  is non trivial, listed in Proposition~\ref{cor:list}, and in these cases, there is no obstruction except eventually in one particular case. 
\end{teo}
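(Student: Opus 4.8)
The plan is to reduce everything to the single cohomological invariant controlling local rigidity, namely $H^{1}(\mathbb{X},T_{\mathbb{X}})$, and to compute it uniformly across the classification. A smooth projective rank one horospherical variety of Picard number two carries two extremal contractions, coming from the two rays of its cone of curves; I would single out the fiber-type contraction $\pi : \mathbb{X} \to Y$ onto a flag variety $Y = G/P$, whose fibers are governed by the rank one, Picard number one structure. Writing the relative tangent sequence
$$0 \lra T_{\mathbb{X}/Y} \lra T_{\mathbb{X}} \lra \pi^{*}T_{Y} \lra 0,$$
and pushing forward by $\pi$, turns each term into a homogeneous vector bundle on $G/P$, so that the cohomology of $T_{\mathbb{X}}$ is computed by the cohomology of homogeneous bundles via the Borel--Weil--Bott theorem. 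The combinatorial data classifying $\mathbb{X}$ (the colored fan, equivalently the weights defining the contractions) feeds directly into the weights whose regularity or singularity is tested by Bott's algorithm.

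The computational engine is therefore a single Borel--Weil--Bott analysis parametrized by the classification data. Because the automorphism group of every such $\mathbb{X}$ is already determined in \cite{Villeneuve2025}, the dimension $h^{0}(\mathbb{X},T_{\mathbb{X}}) = \dim \Lie \Aut(\mathbb{X})$ is known in advance. I would combine this input with the vanishing $h^{i}(\mathbb{X},T_{\mathbb{X}}) = 0$ for $i \ge 2$ — obtained from the same Bott bookkeeping — and with the Euler characteristic $\chi(\mathbb{X},T_{\mathbb{X}})$, pinning down $h^{1}$ through $h^{1} = h^{0} - \chi$ whenever the higher cohomology vanishes. In the overwhelming majority of the $750^{+}$ families, the tested weights are singular in the relevant degree, forcing $h^{1}(\mathbb{X},T_{\mathbb{X}}) = 0$ and hence local rigidity.

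The finitely many exceptions are exactly the families for which a regular weight survives and contributes a nonzero class to $H^{1}$; these are collected in Proposition~\ref{cor:list}. For each such family I would then attack the obstruction problem. The cleanest route is to establish $H^{2}(\mathbb{X},T_{\mathbb{X}}) = 0$ by the same Borel--Weil--Bott computation, which makes the base of the Kuranishi family smooth and the deformations automatically unobstructed. When $H^{2}$ does not vanish, I would instead exhibit the deformations explicitly — by deforming the relevant homogeneous bundle, or the center of the birational contraction — realizing $\mathbb{X}$ inside a smooth family whose Kodaira--Spencer map surjects onto $H^{1}(\mathbb{X},T_{\mathbb{X}})$, exactly in the spirit of the $G_{2}$-variety deforming into the orthogonal grassmannian in the Picard number one case.

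The main obstacle is precisely this last step in the single residual case where $H^{2}(\mathbb{X},T_{\mathbb{X}})$ is nonzero and no explicit family is immediately visible. There one cannot conclude unobstructedness formally, and must analyze the quadratic obstruction map $H^{1}(\mathbb{X},T_{\mathbb{X}}) \to H^{2}(\mathbb{X},T_{\mathbb{X}})$ — cup product composed with the Lie bracket on $T_{\mathbb{X}}$ — to decide whether the potential obstruction genuinely vanishes. Controlling this nonlinear map, rather than the linear cohomology handled uniformly by Borel--Weil--Bott, is where the real difficulty and the genuine ambiguity reflected in the phrase ``except eventually in one particular case'' are concentrated.
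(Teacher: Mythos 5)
Your proposal misses the paper's central reduction, and the route you substitute for it does not go through uniformly. The paper never touches a relative tangent sequence of a contraction: it uses the action-sheaf exact sequence $0\to S_{\mathbb{X}}\to T_{\mathbb{X}}\to N_{\partial\mathbb{X}/\mathbb{X}}\to 0$ together with the vanishing $H^i(\mathbb{X},S_{\mathbb{X}})=0$ for $i>0$ (from \cite{Pasquier2010}), which gives at once $H^1(\mathbb{X},T_{\mathbb{X}})=H^1(Y,N_{Y/\mathbb{X}})\oplus H^1(Z,N_{Z/\mathbb{X}})$; the normal bundles of the two closed orbits $Y\cong G/P_{\alpha_0,\beta}$ and $Z\cong G/P_{\alpha_1,\beta}$ are genuinely homogeneous line bundles on flag varieties, where Borel--Weil--Bott applies cleanly (with the weights $\chi_Y$, $\chi_Z$ and the combinatorics of $c_{\alpha_i}$, $d_{\alpha_i}$). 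Your fibration $\pi:\mathbb{X}\to G/P$ is not available in this generality: the two closed orbits are in general non-isomorphic, so $\mathbb{X}$ is not a $\Pbb^1$-bundle over a common flag base, one or both extremal contractions can be birational, and even when a fiber-type contraction exists the fibers are themselves non-homogeneous horospherical varieties, so $\pi_*T_{\mathbb{X}/Y}$ and $R^1\pi_*T_{\mathbb{X}/Y}$ are not homogeneous bundles with known Bott data --- computing them is essentially the original problem restricted to a fiber. Without the action-sheaf vanishing, ``a single Borel--Weil--Bott analysis'' on $\mathbb{X}$ itself has no meaning, since $\mathbb{X}$ is not homogeneous.

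Two further steps would fail concretely. First, your blanket claim that $h^i(\mathbb{X},T_{\mathbb{X}})=0$ for $i\geq 2$ ``from the same Bott bookkeeping'' is false exactly in the delicate case: the paper shows that in case (XVIII)(8) with $a_1\geq 3$ (the $\operatorname{SL}_3\times G_2$ situation of Theorem~\ref{th:obstruction}) one has $H^2(Y,N_{Y/\mathbb{X}})\neq 0$, hence $H^2(\mathbb{X},T_{\mathbb{X}})\neq 0$; so the Euler-characteristic bookkeeping $h^1=h^0-\chi$ breaks down precisely where it matters (and computing $\chi(\mathbb{X},T_{\mathbb{X}})$ across the $750^{+}$ families is an unnecessary detour --- the paper needs neither $h^0$ nor $\Aut(\mathbb{X})$). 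Second, the unobstructedness in the exceptional cases is not obtained in the paper primarily by $H^2=0$ via Bott: it is obtained by showing most exceptional $\mathbb{X}$ are Fano or weak Fano (Kodaira--Nakano, resp.\ Sano's theorem \cite{Sano2013}), then by a two-orbit argument forcing $H^2=0$ when $H^1\neq 0$, and only then by case-by-case computation of $H^2$ of the boundary normal bundles for the handful of leftovers; your proposal never introduces the (weak) Fano input. Finally, in the one residual case the paper explicitly does \emph{not} analyze the quadratic obstruction map and leaves the question open, so your last step is a promissory note rather than a proof --- which is acceptable for the ``eventually one particular case'' clause, but only if the rest of the argument stands, and as explained above its foundation does not.
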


At the end of the paper, we describe a deformation for some varieties $\mathbb{X}$ from the list of not locally rigid cases. The used method works because the chosen cases are horospherical degenerations of a spherical $G$-variety. They coincide to some cases where $H^1(\mathbb{X},T_\mathbb{X})=\Cbb$, where there is a non zero $G$-fixed point in $H^1(\mathbb{X},T_\mathbb{X})$. But we see that, it does not work in other cases, even for general Hirzebruch varieties, for which families of deformations are however completely described in \cite{Manetti04}. It seams to be a long and difficult work to get explicit descriptions of deformations in all cases of not local rigidity smooth projective rank one horospherical varieties with Picard number $2$.\\

The paper is organized as follows. 

In section~\ref{sec1}, after introducing notation and classical results of representation theory, we explain the strategy to compute $H^1(\mathbb{X},T_\mathbb{X})$ for smooth projective rank one horospherical varieties $\mathbb{X}$ with Picard number $2$. 

In section~\ref{sec2}, we give a characterization to have non trivial $H^1(\mathbb{X},T_\mathbb{X})$ and deduce the list of smooth projective rank one horospherical varieties with Picard number $2$ with non trivial $H^1(\mathbb{X},T_\mathbb{X})$. The details to get the list from the characterization are given in the appendix.

In section~\ref{sec3}, we prove that in almost all cases above there is no obstruction to local deformation. Note that we get Fano or weak-Fano varieties in most cases. 

In section~\ref{sec4}, we give few examples of explicit deformations. 


\section{Notation, context and strategy}\label{sec1}

\subsection{Preliminaries}

Smooth projective rank one horospherical varieties with Picard number $2$ are discribed with several parameters: \begin{itemize}
    \item a connected, reductive algebraic group $G$ equal to a simply connected simple group $G_0$, or to a product of such $G_0$ with at most two other groups (simple or equal to $\mathbb{C}^*$ or $\{1\}$);
    \item a simple root of $G_0$ denoted by $\beta$;
    \item zero to two other simple roots of $G$;
    \item a non negative integer denoted by $a_1$.
\end{itemize}

We can denote by $\mathbb{X}(G,\beta,\alpha_0,\alpha_1,a_1)$ these varieties or just $\mathbb{X}$ if there is no confusion. 

For the varieties studied in this paper, all possible quadruple $(G,\beta,\alpha_0,\alpha_1)$ with $a_1=0$ or $a_1\geq 1$ are given in \cite{Villeneuve2025}. There are given by a roman numeral from (I) to (XVIII) in particular depending of the group, except for the case where $G=G_0\times G_1\times G_2$; followed by an Arabic numeral depending on the choice of simple roots. And we have prime and not prime cases depending of the order in the pair $(\alpha_0,\alpha_1)$ (only when $a_1\geq 1$).


\justifying We now introduce classical notation on the root system associated to the group $G$. First, we fix a maximal torus $T$ and a Borel subgroup $B$ of $G$ containing $T$. Then, we denote by $R$ the set of roots of $(G,T)$, by $R^+$ the set of positive roots and by $S$ the set of simple roots of $(G,B,T)$. We remind that $R^+$ is the subset of $R$ whose elements are the linear combinations of elements of S with non negative coefficients. The choice of $R^+$ gives a partial order $\prec$ on $R^+$ : we have $\alpha\prec\beta\Longleftrightarrow\beta -\alpha$ is a linear combination of simple roots with non negative coefficients. 
Let $\alpha$ be a simple root. We denote by $\varpi_{\alpha}$ the fundamental weight associated to $\alpha$, so that $(\varpi_{\alpha})_{\alpha\in S}$ is the dual basis of $(\alpha^{\vee})_{\alpha\in S}$. We denote by $P_{\alpha}$ the maximal parabolic subgroup containing $B$ such that $\varpi_{\alpha}$ is a character of $P_{\alpha}$. We denote by $\mathfrak{X}(T)$ the lattice of characters of $T$ (or $B$) generated by the $\varpi_{\alpha}$ and $\mathfrak{X}^+(T)\subset\mathfrak{X}(T)$ the monoid of dominant characters.

Let $W$ be the Weyl group of $(G,T)$, it is generated by the reflections $s_\alpha$ associated to the simple roots $\alpha$. When $I\subset S$, we denote by $W_I$ the subgroup of $W$ generated by the reflections associated to the simple roots of $I$. We denote by $P_I$ the intersection of the parabolic subgroups $P_{\alpha}$ with $\alpha\in I$. In particular, $P_{\emptyset}$ = $G$ and $P_S$ = $B$.

For $\lambda\in\mathfrak{X}^+(T)$, we denote by $V(\lambda)$ the irreducible $G$-module of highest weight $\lambda$ and by $\upsilon_{\lambda}$ a highest weight vector of $V(\lambda)$. \newline
We denote by $\rho$ the half sum of positive roots (or equivalently the sum of fundamental weights).

Our main tool is the following classical result.

\color{black}
\begin{teo}[Borel-Weil-Bott] Let $V$ be an irreducible $P$-module of highest weight $\chi$. Denote by $\nu(\chi)$ the vector bundle $G\times^P V$ over $G/P$ and by $w_0^P(\chi)$ the lowest weight of $V$. We either have :

$\ast$ if there exists a root $\alpha$ with $\langle w_0^P(\chi)-\rho,\alpha^\vee\rangle$ = $0$, then, for any $i\geq 0$, $H^i(G/P,\nu(\chi))$ = $0$;

$\ast$ otherwise, there exists an element $w$ of the Weyl group with $\langle w(w_0^P(\chi)-\rho),\alpha^\vee\rangle <0$, for all positive roots $\alpha$. Denote by $\ell(w)$ the length of $w$. Then, we have $H^i(G/P,\nu(\chi))$ = $0$ for $i\neq\ell(w)$ and $H^{\ell(w)}(G/P,\nu(\chi))$ is the $G$-module of highest weight $-w(w_0^P(\chi)-\rho)-\rho$.
\end{teo}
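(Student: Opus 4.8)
The plan is to give the standard Demazure-style proof, reducing everything to iterated $\mathbb{P}^1$-fibrations over the full flag variety $G/B$. \textbf{First, reduction to $G/B$.} Let $L$ be the Levi factor of $P$ and $\pi\colon G/B\to G/P$ the natural projection, whose fibers are the flag variety $P/B$ of $L$. Since $V$ is the irreducible $P$-module of highest weight $\chi$, Borel-Weil applied on the fibers identifies $V$ with the sections of a homogeneous line bundle on $P/B$, and shows that the associated line bundle $\mathcal{L}$ on $G/B$ satisfies $R^q\pi_*\mathcal{L}=0$ for $q>0$ and $\pi_*\mathcal{L}=\nu(\chi)$. The Leray spectral sequence then gives $H^i(G/P,\nu(\chi))\cong H^i(G/B,\mathcal{L})$ for all $i$, reducing the statement to line bundles on $G/B$.

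\textbf{Second, the $\mathbb{P}^1$-fibrations.} For a simple root $\alpha$, the minimal parabolic $P_\alpha$ gives a $\mathbb{P}^1$-fibration $p_\alpha\colon G/B\to G/P_\alpha$. Restricting a line bundle of weight $\lambda$ to the fibers reduces to an $\SL_2$-computation controlled by $n=\langle\lambda,\alpha^\vee\rangle$: for $n\ge 0$ the pushforward is nonzero and $R^1=0$; for $n=-1$ all direct images vanish, so the total cohomology vanishes; and for $n\le-2$ only $R^1$ survives, yielding a degree shift $H^i(G/B,\mathcal{L}_\lambda)\cong H^{i-1}(G/B,\mathcal{L}_{s_\alpha\cdot\lambda})$ with $s_\alpha\cdot\lambda=s_\alpha(\lambda+\rho)-\rho$ the dot action.

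\textbf{Third, conclusion.} The two branches of the theorem match the singular/regular dichotomy for the weight $w_0^P(\chi)-\rho$ under the dot action. If $\langle w_0^P(\chi)-\rho,\alpha^\vee\rangle=0$ for some root $\alpha$, this weight lies on a wall; after conjugating $\alpha$ to a simple root one reaches a situation with $n=-1$, forcing all cohomology to vanish. If instead it is regular, there is a unique $w\in W$ moving it strictly inside the antidominant chamber; choosing a reduced expression for $w$ and applying the shift of Step two exactly $\ell(w)$ times raises the degree from $0$ to $\ell(w)$ and transports the weight into the dominant region, where the base case computes $H^0$. One obtains $H^{\ell(w)}(G/B,\mathcal{L})=V\bigl(-w(w_0^P(\chi)-\rho)-\rho\bigr)$, with $-w(w_0^P(\chi)-\rho)-\rho$ dominant by antidominance of $w(w_0^P(\chi)-\rho)$, and $H^i=0$ otherwise.

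\textbf{Base case and main obstacle.} The base case is the Borel-Weil theorem: for dominant $\nu$, $H^0(G/B,\mathcal{L}_\nu)=V(\nu)$ and $H^{>0}=0$, proved by realizing global sections as matrix coefficients or via ampleness and Kodaira vanishing. The main obstacle is Step two: establishing the $\SL_2$ degree-shift isomorphism cleanly and checking that the iteration along a reduced word yields a well-defined answer independent of the chosen expression, which ultimately follows because the resulting $G$-module is intrinsic. The remaining delicate point is the bookkeeping of conventions — relating the lowest weight $w_0^P(\chi)$ to the line-bundle normalization and tracking signs in the dot action — where sign errors most easily arise.
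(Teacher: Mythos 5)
The paper does not prove this statement: it is quoted as a classical tool (Borel--Weil--Bott), so there is no internal proof to compare against. Judged on its own, your proposal is a correct outline of the standard Demazure-style proof. The reduction to $G/B$ via $\pi_*\mathcal{L}=\nu(\chi)$ and $R^{q>0}\pi_*\mathcal{L}=0$ plus Leray is right, the trichotomy $n\geq 0$, $n=-1$, $n\leq -2$ for the $\mathbb{P}^1$-fibrations $G/B\to G/P_\alpha$ is the correct local $\SL_2$ computation, and the degree-shift $H^i(\mathcal{L}_\lambda)\cong H^{i-1}(\mathcal{L}_{s_\alpha\cdot\lambda})$ for $\langle\lambda,\alpha^\vee\rangle\leq -2$ is stated with the right dot-action normalization. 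Two points deserve more care than your sketch gives them. First, in the regular case you must justify that along a reduced expression $w=s_{\gamma_1}\cdots s_{\gamma_{\ell(w)}}$ \emph{every} intermediate pairing strictly crosses a wall (pairing $\leq -2$, never $=-1$ or $\geq 0$); this is exactly where regularity of $w_0^P(\chi)-\rho$ under the dot action is used, via the standard fact that $w$ takes the weight into the open antidominant chamber one simple wall at a time. Second, in the singular case, ``conjugating $\alpha$ to a simple root'' must be implemented by the same shift chain: one moves the weight by dot-reflections, without changing whether cohomology vanishes, until it has pairing $-1$ against some simple coroot, and only then does the $n=-1$ fiberwise vanishing kill everything; as stated this step is glib but repairable by the standard argument. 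Finally, you correctly flag the convention bookkeeping: the paper's formulation is normalized on the lowest weight $w_0^P(\chi)$ with antidominant target chamber, which is why the conclusion reads $-w(w_0^P(\chi)-\rho)-\rho$; your fiberwise Borel--Weil identification of $V$ with sections over $P/B$ must be set up with the matching duality convention for the two formulations to agree. With those standard details filled in, the argument is complete.
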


Recall that, in our context, $G$ can equal $G_0$, $G_0\times G_1$ or $G_0\times G_1\times G_2$, where $G_0$ is simple, $G_1$ is either $\{1\}$, $\Cbb^*$ or simple, and $G_2$ is either $\Cbb^*$ or simple (and of type $A_m$). See the list in \cite{Villeneuve2025} for more detail. 

When $G_1$ or $G_2$ are  $\{1\}$ or $\Cbb^*$, there is no root $\alpha_0$ or $\alpha_1$. To get a uniform notation, we introduce the following.

\color{black}
\begin{defi}\label{im}
We say that $\alpha_0$ is an imaginary root if $G_1$ = $\{1\}$ and we denote by $\varpi_{\alpha_0}$ = $0$ by convention. In the same way, we say $\alpha_1$ is an imaginary root if $G_1$ = $\mathbb{C}^*$ or $G_2$ = $\mathbb{C}^*$ and $\varpi_{\alpha_1}$ = id$_{\mathbb{C}^*}$. 
\end{defi}

We denote by $w_0^{\alpha,\beta}$ the biggest element of $W_{S\setminus\{\alpha,\beta\}}$. We know that $w_0^{\alpha,\beta}$ is generated by the reflection $s_{\gamma}$ with $\gamma\in S\setminus\{\alpha,\beta\}$ and since $s_{\gamma}(\varpi_{\alpha})$ = $\varpi_{\alpha}$ and $s_{\gamma}(\varpi_{\beta})$ = $\varpi_{\beta}$ for all $\gamma\in S$, then we obtain $w_0^{\alpha,\beta}(\varpi_{\alpha})$ = $\varpi_{\alpha}$ and $w_0^{\alpha,\beta}(\varpi_{\beta})$ = $\varpi_{\beta}$.

We will also need the following description proved in \cite{Villeneuve2025}.

\begin{lem}\label{coeffs c et d}
With the previous notations, $w_0^{\alpha,\beta}(\alpha^{\vee})$ is the biggest root in $R^{\vee}$ with partial order $\prec$ with coefficient $1$ in $\alpha^{\vee}$ and coefficient $0$ in $\beta^{\vee}$.
\end{lem}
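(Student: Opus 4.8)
The plan is to work inside the dual root system $R^\vee$, writing $L=S\setminus\{\alpha,\beta\}$ and $w_L=w_0^{\alpha,\beta}$ for the longest element of $W_L$, and to study the set
\[
C=\{\gamma^\vee\in R^\vee \ :\ [\gamma^\vee]_{\alpha^\vee}=1,\ [\gamma^\vee]_{\beta^\vee}=0\},
\]
where $[\,\cdot\,]$ denotes the coefficient in the basis of simple coroots and $\prec$ is the analogue on coroots of the order recalled above (so $\mu\preceq\nu$ iff $\nu-\mu$ is a non-negative combination of simple coroots). First I would note that every element of $W_L$ is a product of reflections $s_\delta$ with $\delta\in L$, and each such $s_\delta$ only modifies the $\delta^\vee$-coordinate of a coroot. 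Hence $w_L$ leaves the $\alpha^\vee$- and $\beta^\vee$-coordinates of each coroot unchanged, so $w_L$ maps $C$ bijectively onto itself (recall $w_L$ is an involution). Since $\alpha^\vee\in C$, this already shows $w_L(\alpha^\vee)\in C$, i.e. it is a legitimate candidate for the maximum.

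Next I would check that $\alpha^\vee$ is the \emph{smallest} element of $C$ for $\prec$. Any $\gamma^\vee\in C$ has coefficient $1>0$ on $\alpha^\vee$, so it is a positive coroot and therefore all of its coordinates are non-negative; since its $\alpha^\vee$- and $\beta^\vee$-coordinates are $1$ and $0$, the difference $\gamma^\vee-\alpha^\vee$ is a non-negative combination of the $\delta^\vee$ with $\delta\in L$, which is precisely the statement $\alpha^\vee\preceq\gamma^\vee$.

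The key step is then to prove that $w_L$ reverses $\prec$ on $C$. Given $\gamma_1^\vee\preceq\gamma_2^\vee$ in $C$, the difference $\gamma_2^\vee-\gamma_1^\vee$ lies in the cone $\sum_{\delta\in L}\Zbb_{\geq 0}\,\delta^\vee$ of the sub-coroot lattice. Applying $w_L$ and using that, as the longest element of $W_L$, it sends every positive coroot of the subsystem generated by $L$ to a negative one — so each $w_L(\delta^\vee)$, $\delta\in L$, is a non-positive combination of the $\delta^\vee$, $\delta\in L$ — the image $w_L(\gamma_2^\vee-\gamma_1^\vee)$ lands in the opposite cone, giving $w_L(\gamma_2^\vee)\preceq w_L(\gamma_1^\vee)$. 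This is the crux of the argument and the only place where the specific nature of $w_L$ (as opposed to an arbitrary element of $W_L$) is used; I expect it to be the main obstacle, although it reduces to the standard fact about longest elements of parabolic subgroups.

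Finally I would combine these observations: $w_L$ is an order-reversing bijection of the finite poset $(C,\prec)$ whose minimum is $\alpha^\vee$, so it carries the minimum to the maximum. For any $\gamma^\vee\in C$ write $\gamma^\vee=w_L(\mu^\vee)$ with $\mu^\vee\in C$; then $\alpha^\vee\preceq\mu^\vee$ gives $\gamma^\vee=w_L(\mu^\vee)\preceq w_L(\alpha^\vee)$. Hence $w_L(\alpha^\vee)=\max_\prec C$, which is exactly the assertion of the lemma. Everything beyond the order-reversal step is a direct consequence of the parabolic combinatorics recalled above, so once that step is in place the proof is complete.
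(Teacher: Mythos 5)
Your argument is correct and complete. Note that the paper itself gives no proof of this lemma --- it is quoted from \cite{Villeneuve2025} --- so your write-up serves as a self-contained substitute rather than a variant of an in-paper argument. Each of your steps checks out: since every $s_\delta$ with $\delta\in S\setminus\{\alpha,\beta\}$ alters only the $\delta^\vee$-coordinate, all of $W_{S\setminus\{\alpha,\beta\}}$ fixes the $\alpha^\vee$- and $\beta^\vee$-coefficients, so the set $C$ is stable (bijectivity does not even require the involutivity of $w_0^{\alpha,\beta}$, since $(w_0^{\alpha,\beta})^{-1}$ also lies in $W_{S\setminus\{\alpha,\beta\}}$); the minimality of $\alpha^\vee$ in $C$ follows exactly as you say from positivity of any coroot with a positive coefficient; and the order-reversal step, which you rightly identify as the crux, reduces to the standard fact that the longest element of $W_{S\setminus\{\alpha,\beta\}}$ sends every positive coroot of the subsystem generated by $S\setminus\{\alpha,\beta\}$ to a negative one, so that the cone $\sum_{\delta\in S\setminus\{\alpha,\beta\}}\Zbb_{\geq 0}\,\delta^\vee$ is carried into its negative. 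The conclusion that an order-reversing bijection of $(C,\prec)$ takes the minimum $\alpha^\vee$ to a maximum then simultaneously proves that a $\prec$-greatest element of $C$ exists and identifies it as $w_0^{\alpha,\beta}(\alpha^\vee)$, which is precisely the content of the lemma.
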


The following notation will be used to characterize the (non) vanishing of $H^1(\mathbb{X},T_\mathbb{X})$.

\begin{notation}\label{not_c_d} For any $i,j\in\{0,1\}$, with $i\neq j$, we denote by $c_{\alpha_i}$ and $d_{\alpha_i}$ the coefficients of $\alpha_i^\vee$ in $w_0^{\alpha_j,\beta}(\alpha_j^{\vee})$ and $w_0^{\alpha_j,\beta}(\beta^{\vee})$ respectively, and by $\alpha_i'$ := $w_0^{\alpha_j,\beta}(\alpha_i)$.
\end{notation}

And we use the natural following convention to deal case with imaginary roots.

\begin{convention} For any $i,j\in\{0,1\}$, with $i\neq j$, if $\alpha_i$ is an imaginary root (see Definition \ref{im}), then $c_{\alpha_i}$ = $c_{\alpha_j}$ = $d_{\alpha_i}$ = $0$.
\end{convention}




The coefficient cannot take any value, and we can compute them quite easily. For example, to use of Theorem~\ref{theo1} in order to get the list of not trivial $H^1(\mathbb{X},T_\mathbb{X})$'s, we can apply the following result.

\begin{lem}\label{lem:valeurs_c}
For any $i\in\{0,1\}$ if $G$ = $G_0$, the coefficient $c_{\alpha_i}$ is always equal to $0$, $1$ or $2$ except (eventually) if $G_0$ is of type $E_7$ and $\beta=\chi_7$ or $G_0$ is of type $E_8$ and $\beta=\chi_7$ or $\chi_8$.

In cases where $G\neq G_0$, we can have similar results. In particular, $c_{\alpha_i}=0$ as soon as $\alpha_0$ and $\alpha_1$ are not roots of the same simple group (including imaginary cases). And in cases (XVIII) (ie when $\alpha_0$ and $\alpha_1$ are both in $G_1$) $c_{\alpha_i}$ is 1 or 2, except in case (8) (ie $G_1$ is of type $G_2$) where $c_{\alpha_i}$ is 1 or 3.

\end{lem}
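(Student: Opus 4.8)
The plan is to reduce the statement to a purely combinatorial computation in the coroot system $R^\vee$ and then run a short case analysis over Dynkin types. By Notation~\ref{not_c_d} together with Lemma~\ref{coeffs c et d}, $c_{\alpha_i}$ is the coefficient of $\alpha_i^\vee$ in the biggest coroot $\theta$ (for the order $\prec$) having coefficient $1$ on $\alpha_j^\vee$ and coefficient $0$ on $\beta^\vee$. The first observation is that the coroots with coefficient $0$ on $\beta^\vee$ are exactly those of the parabolic subsystem spanned by $S^\vee\setminus\{\beta^\vee\}$, that is, by the Dynkin diagram of $R^\vee$ with the node $\beta$ erased; since $\theta$ has coefficient $1$ on $\alpha_j^\vee$, it lies in the connected component $\Psi$ of that diagram containing $\alpha_j^\vee$. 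Hence $c_{\alpha_i}=0$ as soon as $\alpha_i^\vee\notin\Psi$, and otherwise $\theta\preceq\theta_\Psi$, where $\theta_\Psi$ is the highest root of the irreducible system $\Psi$, so that $c_{\alpha_i}$ is bounded above by the coefficient of $\alpha_i^\vee$ in $\theta_\Psi$.

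For $G=G_0$ the statement now becomes: for which simple type of $R^\vee$ and which erased node $\beta$ can a component $\Psi$ carry a highest-root coefficient $\geq 3$? Recalling that the highest root has all its coefficients $\leq 2$ precisely in the classical types $A$, $B$, $C$, $D$, I would enumerate the components produced by deleting a single node, using that $R^\vee$ has the same type as $R$ for $E_6,E_7,E_8,F_4,G_2$ and classical type otherwise (and that the conclusion $c_{\alpha_i}\leq 2$ is insensitive to the $B\leftrightarrow C$ duality). Deleting a node from $F_4$ or $G_2$ drops the rank to at most $3$ and leaves a classical component; deleting a node from $E_6$ always leaves a component of type $A$ or $D$; and the only deletions isolating an exceptional component with a coefficient $\geq 3$ are node $7$ of $E_7$ (leaving $E_6$), node $7$ of $E_8$ (leaving $E_6\sqcup A_1$) and node $8$ of $E_8$ (leaving $E_7$). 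These are exactly the cases listed in the statement, and in every other case $\Psi$ is classical, whence $c_{\alpha_i}\leq 2$. The hedging word ``eventually'' is explained by the remark that $\theta$ equals $\theta_\Psi$, and hence reaches the large coefficient, only when $\alpha_j^\vee$ itself has coefficient $1$ in $\theta_\Psi$; for other positions of $\alpha_j$ the root $\theta$ is strictly smaller.

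For $G\neq G_0$ the root system decomposes as the orthogonal union of the systems of the simple factors, so every root is supported on a single factor. As $\theta$ has coefficient $1$ on $\alpha_j^\vee$, it is supported on the factor containing $\alpha_j$, giving $c_{\alpha_i}=0$ whenever $\alpha_i$ and $\alpha_j$ lie in different factors (the imaginary conventions being covered directly): this is the first assertion. In case (XVIII) both $\alpha_0$ and $\alpha_1$ lie in $G_1$ while $\beta\in G_0$ is orthogonal to $G_1$, so the condition on $\beta^\vee$ is automatic and $\Psi$ is the whole coroot system $G_1^\vee$, with no node deleted. I would then read off the biggest root with $\alpha_j^\vee$-coefficient $1$ from the short list of admissible $G_1$ furnished by \cite{Villeneuve2025}: for the classical types this yields $c_{\alpha_i}\in\{1,2\}$, while for $G_1$ of type $G_2$ the explicit computation, the biggest root with coefficient $1$ on the long simple coroot being $3\alpha_{\mathrm{short}}^\vee+\alpha_{\mathrm{long}}^\vee$ and the one with coefficient $1$ on the short simple coroot being $\alpha_{\mathrm{short}}^\vee+\alpha_{\mathrm{long}}^\vee$, gives $c_{\alpha_i}\in\{1,3\}$, i.e. case (8).

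The main obstacle is the exceptional bookkeeping of the second paragraph: one must determine, for every node $\beta$ of $E_6,E_7,E_8$ (and of $F_4$), the isomorphism type of the component of $\alpha_j^\vee$ after deletion and compare it with the tabulated highest-root coefficients, while keeping the coroot duality straight, harmless for the self-dual types $E_n,F_4,G_2$ but to be watched in the classical and $G_2$ computations where long and short roots are exchanged. Everything else is the routine verification that the enumerated classical components have highest-root coefficients at most $2$, together with the single explicit $G_2$ calculation.
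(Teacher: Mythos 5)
Your argument is correct and follows essentially the same route as the paper: the coroot $w_0^{\alpha_j,\beta}(\alpha_j^{\vee})$ lies in the subsystem $\Gamma_{S^\vee\setminus\{\beta\}}$ obtained by deleting $\beta$, classical components have all root coefficients at most $2$, and the only deletions leaving an $E_6$ or $E_7$ component are $\beta=\chi_7$ in type $E_7$ and $\beta=\chi_7$ or $\chi_8$ in type $E_8$. The only difference is that you actually carry out the $G\neq G_0$ verification (including the explicit $G_2$ coroot computation giving $c_{\alpha_i}\in\{1,3\}$ in case (XVIII)(8)), which the paper explicitly leaves to the reader.
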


\begin{proof}
If $G_0$ is of type $A_m$, $B_m$, $C_m$ ot $D_m$, we always have $0\leq c_{\alpha_1}\leq 2$, according to \cite{Bourbaki2007}. 
Denote $S_0$ the set of simple roots of $G_0$.
If $G_0$ is of type $F_4$, $\Gamma_{S_0\setminus\{\beta\}}$ is of type $A_m$, $B_m$, $C_m$ or $D_m$, so we have again $0\leq c_{\alpha_1}\leq 2$ (since $w_0^{\alpha_0,\beta}({\alpha_0}^{\vee})$ is a root of $\Gamma_{S^{\vee}\setminus\{\beta\}}$). 

Similarly if $G_0$ of type $G_2$ or of type $E_6$, then $\Gamma_{S_0\setminus\{\beta\}}$ is of type $A_m$, $B_m$, $C_m$ or $D_m$, and again $0\leq c_{\alpha_1}\leq 2$.

Finally, if $G$ is of type $E_7$ or $E_8$, then $\Gamma_{S\setminus\{\beta\}}$ can be of type $E_6$ or $E_7$, which can give $c_{\alpha_1}>2$. For that, $\beta$ must respectively be $\chi_7$ or, $\chi_7$ or $\chi_8$.

The cases where $G\neq G_0$ are left to the reader.
\end{proof}

\begin{rem}
As in Lemma~\ref{lem:valeurs_c}, we can prove that when $G$ = $G_0$, for any $i\in\{0,1\}$, $d_{\alpha_i}$ is 0, 1 or 2 except (eventually) if $G_0$ is of type $E_7$ and $\alpha_1=\chi_7$ or $G_0$ is of type $E_8$ and $\alpha_1=\chi_7$ or $\chi_8$.  The condition $\lambda>0$ in Theorem~\ref{theo2}, equivalent to $a_1\leq d_{\alpha_0}-2$, cannot be satisfied in a lot of cases where $a_1>0$.

When $G_0$ is of type $G_2$, in the cases (XVII)(1) (resp. (XVII)(1)') we obtain that $d_{\alpha_0}$ (resp. $d_{\alpha_1}$)  equals to $3$.
\end{rem}

\subsection{Strategy}

\justifying Our goal here is to study $H^1(\mathbb{X}, T_{\mathbb{X}})$. If the computation gives $0$, we have, according to Kodaira-Spencer theory, that $\mathbb{X}$ is locally rigid, ie does not admit any local deformation.

Since we are dealing with rank one horospherical varieties, we have three $G$-orbits in $\mathbb{X}$, one open and two closed that we denote by $Y$ and $Z$. Moreover, with previous notation, the closed $G$-orbits are isomorphic to $G/P_{\alpha_0,\beta}$ and $G/P_{\alpha_1,\beta}$, respectively denoted by $Y$ and $Z$.

\color{black} For any smooth rank one horospherical variety, we have the following short exact sequence  \\

$$0\longrightarrow S_X\longrightarrow T_X\longrightarrow N_{\partial X/X}\longrightarrow 0$$ \\

\noindent\justifying with $S_X$ the action sheaf of $X$, ie the subsheaf of $T_X$ made of vector fields tangent to $\partial X$ = $Y\cup Z$, which gives the following long exact sequence  \\

\ \ \ \ $H^0(X,S_X)\longrightarrow H^0(X,T_X)\longrightarrow H^0(\partial X,N_{\partial X/X})\longrightarrow H^1(X,S_X)\longrightarrow H^1(X,T_X)\longrightarrow H^1(\partial X,N_{\partial X/X})\longrightarrow H^2(X,S_X)\longrightarrow H^2(X,T_X)\longrightarrow ...$ \newline\newline

\justifying But for any smooth projective horospherical variety, we know that $\forall i>0$, $H^i(X,S_X)$ = $0$ by \cite[Th.0.4]{Pasquier2010}. Hence, we obtain that 
 \begin{equation}\label{eq1}
    H^1(X,T_X)=H^1(\partial X,N_{\partial X/X}).
 \end{equation}

According to (\ref{eq1}), we just have to compute $H^1(\partial \mathbb{X},N_{\partial \mathbb{X}/\mathbb{X}})$. Indeed, we know that $H^1(\partial\mathbb{X},N_{\partial\mathbb{X}/\mathbb{X}})$ = $H^1(Y,N_{Y/\mathbb{X}})\oplus H^1(Z,N_{Z/\mathbb{X}})$. 

We know give the steps of the method we will use to study the local rigidity, before to illustrate it in an example. 
\begin{enumerate}
    \item We consider the weight $\chi_Y$ or $\chi_Z$ which describes the total space of $N_{Y/\mathbb{X}}$ or $N_{Z/\mathbb{X}}$, which is equal respectively to $\chi_Y=-\varpi_{\alpha_0}+\varpi_{\alpha_1}+a_1\varpi_{\beta}$ and $\chi_Z=\varpi_{\alpha_0}-\varpi_{\alpha_1}-a_1\varpi_{\beta}$, and then we compute $w_0^{\alpha_0,\beta}(\chi_Y)$ and $w_0^{\alpha_1,\beta}(\chi_Z)$.
    
    \item The next step consists in writing $w_0^{\alpha_0,\beta}(\chi_Y)-\rho$ and $w_0^{\alpha_1,\beta}(\chi_Z)-\rho$ as a linear combination of the fundamental weights that we denote by $\varpi_{\chi_j}$ according Bourbaki's numeration. Note that, at that step, we can already say that $H^1(Y,N_{Y/\mathbb{X}})$ = $0$ (resp. $H^1(Z,N_{Z/\mathbb{X}})$ = $0$) as soon as $H^0(Y,N_{Y/\mathbb{X}}) \neq 0$ (resp. $H^0(Z,N_{Z/\mathbb{X}})\neq 0$), ie if all coefficients are negative, or if one coefficient is zero (indeed in that case, all $H^i(\mathbb{X},T_{\mathbb{X}})$ = $0$ for any $i\in\mathbb{Z}$). 
    
    \item For the next step, we suppose that  there exists a simple root $\gamma$ associated to a positive coefficient (ie the coefficient in $\varpi_\gamma$ in $w_0^{\alpha_1,\beta}(\chi_Y)-\rho$, resp. $w_0^{\alpha_1,\beta}(\chi_Z)-\rho$ is positive; or equivalently $\langle w_0^{\alpha_1,\beta}(\chi_Y)-\rho,\gamma^\vee\rangle>0$, resp. $\langle w_0^{\alpha_1,\beta}(\chi_Z)-\rho,\gamma^\vee\rangle>0$). Then we have to find if there exists a simple reflection which sends $w_0^{\alpha_0,\beta}(\chi_Y)-\rho$ or $w_0^{\alpha_1,\beta}(\chi_Z)-\rho$ (depending on the case we consider) to the interior of the antidominant chamber.
\end{enumerate}

\begin{rem}\label{rem:uniquecoeffpositif} 
\begin{itemize}
\item  We only have to check if the simple reflection $s_\gamma$ fits in step 3. Indeed, for any simple root $\chi_j$, we have $s_{\chi_j}(\chi_j^\vee)=-\chi_j^\vee$. Then, if $\chi_j$ is not a simple root $\gamma$ associated to a positive coefficient (as in step~3), the simple reflection $s_{\chi_j}$ does not send  $w_0^{\alpha_0,\beta}(\chi_Y)-\rho$ or $w_0^{\alpha_1,\beta}(\chi_Z)-\rho$ to the antidominant chamber. 
\item We can suppose that there is a unique $\gamma$ associated to a positive coefficient. Indeed, if we consider two distinct roots $\gamma$ and $\gamma'$ associated to a positive coefficient, then there exists $\delta^{\vee}\in R_+^{\vee}$ such that $s_{\gamma}(\delta^{\vee})$ = $\gamma'^{\vee}$ and then $\langle s_{\gamma}(w_0^{\alpha_0,\beta}(\chi_Y)-\rho),\delta^{\vee})\rangle>0$ or $\langle s_{\gamma}(w_0^{\alpha_1,\beta}(\chi_Z)-\rho),\delta^{\vee})\rangle>0$, so that there exists no simple reflection such that all the coefficients become negative.
\end{itemize}
\end{rem}










From now, we first consider the computations with the $G$-orbit $Y$. 
Before to give an example, we are going to give a lemma to simplify the computation.

\begin{lem}\label{calcul}
With the same notations as above, for any $i,j\in\{0,1\}$, with $i\neq j$, when $\alpha_i$ is non imaginary, we have $w_0^{\alpha_j,\beta}(\varpi_{\alpha_i})$ = $c_{\alpha_i}\varpi_{\alpha_j}+d_{\alpha_i}\varpi_{\beta}-\varpi_{\alpha_i'}$. Else, $w_0^{\alpha_j,\beta}(\varpi_{\alpha_i})$ = $\varpi_{\alpha_i}$. \newline
\end{lem}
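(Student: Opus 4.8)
The plan is to compute $w_0^{\alpha_j,\beta}(\varpi_{\alpha_i})$ directly in the basis of fundamental weights by pairing it against each simple coroot, exploiting two elementary properties of the element $w := w_0^{\alpha_j,\beta}$, the longest element of $W_{S\setminus\{\alpha_j,\beta\}}$: it is an involution, so $w^{-1}=w$, and the natural pairing $\langle\cdot,\cdot\rangle$ between characters and cocharacters is $W$-invariant, so $\langle w\lambda,\gamma^\vee\rangle=\langle\lambda,w^{-1}\gamma^\vee\rangle=\langle\lambda,w\gamma^\vee\rangle$.

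First I would dispose of the imaginary case. If $\alpha_i$ is imaginary then, by Definition~\ref{im}, $\varpi_{\alpha_i}$ is either $0$ (when $G_1=\{1\}$) or the identity character of a central $\Cbb^*$ factor; in both situations $\varpi_{\alpha_i}$ lies in the $W$-fixed part of $\mathfrak{X}(T)$, hence $w(\varpi_{\alpha_i})=\varpi_{\alpha_i}$, which is exactly the asserted ``else'' formula.

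Now suppose $\alpha_i$ is non-imaginary, so that $\alpha_i$ is a genuine simple root lying in $I:=S\setminus\{\alpha_j,\beta\}$. Writing $w(\varpi_{\alpha_i})=\sum_{\gamma\in S}\langle w(\varpi_{\alpha_i}),\gamma^\vee\rangle\,\varpi_\gamma$ and applying the two properties above, each coefficient equals $\langle\varpi_{\alpha_i},w(\gamma^\vee)\rangle$, which is precisely the coefficient of $\alpha_i^\vee$ in the coroot $w(\gamma^\vee)$, since $(\varpi_\alpha)_{\alpha\in S}$ and $(\alpha^\vee)_{\alpha\in S}$ are dual bases. It then remains to evaluate this for the three types of $\gamma$. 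For $\gamma=\alpha_j$ and $\gamma=\beta$ the coefficient is, by Notation~\ref{not_c_d}, exactly $c_{\alpha_i}$ and $d_{\alpha_i}$ respectively, which produces the two terms $c_{\alpha_i}\varpi_{\alpha_j}+d_{\alpha_i}\varpi_{\beta}$.

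The core of the argument is to identify the remaining part $\sum_{\gamma\in I}\langle\varpi_{\alpha_i},w(\gamma^\vee)\rangle\,\varpi_\gamma$ with $-\varpi_{\alpha_i'}$. Here I would use that $w$ is the longest element of the sub-root-system $R_I$: it maps $R_I^+$ onto $-R_I^+$, so $-w$ restricts to the opposition involution $\sigma$ of $R_I$, a permutation of $I$; in particular $\alpha_i'=w(\alpha_i)=-\sigma(\alpha_i)$ with $\sigma(\alpha_i)\in I$ simple, and $w(\gamma^\vee)$ for $\gamma\in I$ is a coroot supported on $\{\delta^\vee:\delta\in I\}$. Because $\varpi_{\alpha_i}$ and each $\varpi_\gamma$ ($\gamma\in I$) agree, when paired against $I$-coroots, with the fundamental weights of the subsystem $R_I$, the displayed sum coincides with $w(\varpi^I_{\alpha_i})$ computed inside $W_I$, and the standard identity $w(\varpi^I_{\alpha_i})=-\varpi^I_{\sigma(\alpha_i)}$ yields $-\varpi_{\alpha_i'}$ in the full system. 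This is consistent with the previous terms: since $\sigma(\alpha_i)\in I$, the weight $\varpi_{\alpha_i'}$ pairs to $0$ against both $\alpha_j^\vee$ and $\beta^\vee$, so it does not disturb the $c_{\alpha_i}$ and $d_{\alpha_i}$ coefficients. The only genuinely delicate point is this last identification, and in particular reconciling the notation $\alpha_i'=w(\alpha_i)$ (which is the negative of a simple root of $R_I$) with the symbol $\varpi_{\alpha_i'}$, read as the fundamental weight attached to $\sigma(\alpha_i)=-\alpha_i'$; everything else is a routine pairing computation against the three families of simple coroots.
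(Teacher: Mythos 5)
Your proof is correct and takes essentially the same route as the paper's: both expand $w_0^{\alpha_j,\beta}(\varpi_{\alpha_i})$ in the fundamental-weight basis and compute each coefficient as $\langle\varpi_{\alpha_i},w_0^{\alpha_j,\beta}(\gamma^\vee)\rangle$, i.e.\ as the coefficient of $\alpha_i^\vee$ in $w_0^{\alpha_j,\beta}(\gamma^\vee)$, which gives $c_{\alpha_i}$ at $\gamma=\alpha_j$, $d_{\alpha_i}$ at $\gamma=\beta$, $-1$ at $\gamma=\alpha_i'$ and $0$ otherwise. You merely spell out two points the paper leaves implicit: the opposition-involution argument showing the coefficients over $\gamma\in S\setminus\{\alpha_j,\beta\}$ assemble to $-\varpi_{\alpha_i'}$ (with $\alpha_i'$ read as the simple root $-w_0^{\alpha_j,\beta}(\alpha_i)$), and the trivial imaginary case where $\varpi_{\alpha_i}$ is $W$-fixed.
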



\begin{proof}
We have : \\

$\langle w_0^{\alpha_j,\beta}(\varpi_{\alpha_i}),{\gamma}^{\vee}\rangle$ = $\langle\varpi_{\alpha_i},w_0^{\alpha_j,\beta}({\gamma}^{\vee})\rangle$ =  $\left\{\begin{array}{ccc}
-1 & \mbox{if} & \gamma = \alpha_i' \\

c_{\alpha_i} & \mbox{if} & \gamma = \alpha_j \\

d_{\alpha_i} & \mbox{if} & \gamma = \beta \\

0 & \mbox{else}. &  \\
\end{array}\right.$ \newline\newline






\end{proof}

\begin{ex} Let us choose case (V)(1) in which $H^0(Y,N_{Y/\mathbb{X}})$ is trivial. We remind that, in this case, we have $G$ = $G_0$ of type $E_6$, $\beta$ = $\chi_1$, $\alpha_0$ = $\chi_2$, $\alpha_1$ = $\chi_3$ and $\chi_Y$ = $-\varpi_{\alpha_0}+\varpi_{\alpha_1}+a_1\varpi_{\beta}$. We also remind that we have $\rho$ = $\sum\limits_{j=1}^m\varpi_{\chi_j}$. Then \\

$w_0^{\alpha_0,\beta}(\chi_Y)-\rho$ = $w_0^{\alpha_0,\beta}(-\varpi_{\alpha_0}+\varpi_{\alpha_1}+a_1\varpi_{\beta})-\sum\limits_{j=1}^6\varpi_{\chi_j}$ 

\setlength{\parindent}{3.1cm} = $-w_0^{\alpha_0,\beta}(\varpi_{\alpha_0})+w_0^{\alpha_0,\beta}(\varpi_{\alpha_1})+a_1w_0^{\alpha_0,\beta}(\varpi_{\beta})-\sum\limits_{j=1}^6\varpi_{\chi_j}$  

= $-\varpi_{\alpha_0}+w_0^{\alpha_0,\beta}(\varpi_{\alpha_1})+a_1\varpi_{\beta}-\sum\limits_{j=1}^6\varpi_{\chi_j}$  $(\ast)$ \newline

\justifying According to Lemma \ref{calcul}, $w_0^{\alpha_0,\beta}(\varpi_{\alpha_1})$ = $-\varpi_{\chi_6}+\varpi_{\chi_2}+\varpi_{\chi_1}$, so we have : \\

$(\ast)$ = $-\varpi_{\chi_2}-\varpi_{\chi_6}+\varpi_{\chi_2}+\varpi_{\chi_1}+a_1\varpi_{\chi_1}-\sum\limits_{j=1}^6\varpi_{\chi_j}$

\setlength{\parindent}{1.2cm} = $a_1\varpi_{\chi_1}-\varpi_{\chi_2}-\varpi_{\chi_3}-\varpi_{\chi_4}-\varpi_{\chi_5}-2\varpi_{\chi_6}$. \newline

\justifying We can observe that the only eventually positive coefficient is $a_1$. Then, the last step consists in computing $\langle s_{\chi_1}(w_0^{\alpha_0,\beta}(\chi_Y)-\rho),{\chi_j}^{\vee}\rangle$, for any $j\in\{1,...,6\}$, by Remark~\ref{rem:uniquecoeffpositif}. \newline


Then, we obtain that $\langle s_{\chi_1}(w_0^{\alpha_0,\beta}(\chi_Y)-\rho),{\chi_j}^{\vee}\rangle$ = $\langle w_0^{\alpha_0,\beta}(\chi_Y)-\rho,s_{\chi_1}({\chi_j}^{\vee})\rangle$

 = $a_1\langle\varpi_{\chi_1},s_{\chi_1}({\chi_j}^{\vee})\rangle-\langle\varpi_{\chi_2},s_{\chi_1}({\chi_j}^{\vee})\rangle-\langle\varpi_{\chi_3},s_{\chi_1}({\chi_j}^{\vee})\rangle-\langle\varpi_{\chi_4},s_{\chi_1}({\chi_j}^{\vee})\rangle-\langle\varpi_{\chi_5},s_{\chi_1}({\chi_j}^{\vee})\rangle-2\langle\varpi_{\chi_6},s_{\chi_1}({\chi_j}^{\vee})\rangle$ \newline

\noindent\justifying and we finally have  

$$\langle s_{\chi_1}(w_0^{\alpha_0,\beta}(\chi_Y)-\rho),{\chi_j}^{\vee}\rangle = \left\{\begin{array}{ccc}
-a_1\leq 0 & \mbox{if} & j = 1 \\

-1<0 & \mbox{if} & j = 2 \\

a_1-1 & \mbox{if} & j = 3 \\

-1<0 & \mbox{if} & j = 4 \\

-1<0 & \mbox{if} & j = 5 \\

-2<0 & \mbox{if} & j = 6 \\
\end{array}\right.$$ 

\justifying If $a_1$ = $0$, we are on a wall of the antidominant chamber and then we have $H^1(\mathbb{X},N_{Y/\mathbb{X}})=0$, and if $a_1>0$, then $a_1-1\geq 0$ so we are not in the antidominant chamber and $H^1(\mathbb{X},N_{Y/\mathbb{X}})=0$. We can do the same study for $Z$, and then conclude if $X$ is locally rigid or not in this case by computing $H^1(\mathbb{X},T_{\mathbb{X}})$.\newline
\end{ex}

\section{Computation of $H^1(\mathbb{X},T_{\mathbb{X}})$}\label{liste}\label{sec2}

\justifying As in the previous example, we first focus on the closed $G$-orbit $Y$, which is isomorphic to $G/P_{\alpha_0,\beta}$; ie we look at $H^1(\mathbb{X},N_{Y/\mathbb{X}})$. We recall that, for any $i,j\in\{0,1\}$, with $i\neq j$, we define integers $c_{\alpha_i}$, $c_{\alpha_j}$, $d_{\alpha_i}$ and  $d_{\alpha_j}$; and recall also that  $\alpha_i'$ denotes $w_0^{\alpha_j,\beta}(\alpha_i)$ (Notation~\ref{not_c_d}). 
We then have the following characterization.

\justifying\begin{teo}\label{theo1}
Let $\mathbb{X}$ be one of the smooth horospherical varieties of Picard number 2, that is not in case (XVIII) (8) or (XVIII) (8)'. The following assertions are equivalent : \\

(1)(a) $\lambda$ = $d_{\alpha_1}+a_1-1>0$ and $2-c_{\alpha_1}>0$, and

\setlength{\parindent}{1.2cm} (b) $\beta$ is linked to at most $\alpha_0$ and $\alpha_1'$, and

\setlength{\parindent}{1.2cm} (c) $\left\lbrace\begin{array}{c}
A\lambda+(c_{\alpha_1}-2)<0 \\

B\lambda-2<0 \\
\end{array}\right.$ \\

\justifying where $A$ = $-\langle{\beta},\alpha_0^{\vee}\rangle$ or $0$ if $\alpha_0$ is imaginary, and $B$ = $-\langle{\beta},\alpha_1'^{\vee}\rangle$ or $0$ if $\alpha_1$ is imaginary, \\

(2) $H^1(Y,N_{Y/\mathbb{X}})$ is not trivial. \\
\end{teo}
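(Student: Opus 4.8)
The plan is to apply Borel--Weil--Bott directly to the closed orbit $Y\cong G/P_{\alpha_0,\beta}$ with the bundle $N_{Y/\mathbb{X}}=\nu(\chi_Y)$, where $\chi_Y=-\varpi_{\alpha_0}+\varpi_{\alpha_1}+a_1\varpi_{\beta}$, and to read off exactly when the degree-one cohomology survives. First I would compute the $\rho$-shifted lowest weight $\mu:=w_0^{\alpha_0,\beta}(\chi_Y)-\rho$ in the basis of fundamental weights, using Lemma~\ref{calcul} together with the fact that $w_0^{\alpha_0,\beta}$ fixes $\varpi_{\alpha_0}$ and $\varpi_{\beta}$. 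This yields
\[
\mu=(c_{\alpha_1}-2)\,\varpi_{\alpha_0}+(d_{\alpha_1}+a_1-1)\,\varpi_{\beta}-2\,\varpi_{\alpha_1'}-\!\!\sum_{\gamma\neq\alpha_0,\beta,\alpha_1'}\!\!\varpi_{\gamma},
\]
the three indices $\alpha_0$, $\beta$, $\alpha_1'$ being pairwise distinct simple roots. Writing $\lambda=d_{\alpha_1}+a_1-1$, the coefficients of $\mu$ are thus $c_{\alpha_1}-2$ at $\varpi_{\alpha_0}$, $\lambda$ at $\varpi_{\beta}$, $-2$ at $\varpi_{\alpha_1'}$, and $-1$ at every other fundamental weight.

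Next, by Borel--Weil--Bott, $H^1(Y,N_{Y/\mathbb{X}})$ is nonzero precisely when $\mu$ is regular and the unique Weyl element $w$ with $w\mu$ strictly antidominant has length one, i.e. $w=s_{\gamma}$ for a single simple reflection. By Remark~\ref{rem:uniquecoeffpositif} the only admissible $\gamma$ is the one carrying the necessarily unique positive coefficient. Under the hypotheses of the theorem the coefficient $c_{\alpha_1}-2$ at $\varpi_{\alpha_0}$ cannot be positive, and if it vanished we would sit on a wall (all cohomology would die); so the only coefficient that can be positive is $\lambda$, which forces simultaneously $\lambda>0$ and $c_{\alpha_1}-2<0$. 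These are exactly the two inequalities of (1)(a), and they pin down $\gamma=\beta$.

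It then remains to determine when $s_{\beta}\mu$ lies in the interior of the antidominant chamber, i.e. when $\langle s_{\beta}\mu,\gamma^{\vee}\rangle<0$ for every simple $\gamma$. Using $\langle s_{\beta}\mu,\gamma^{\vee}\rangle=\langle\mu,\gamma^{\vee}\rangle-\langle\beta,\gamma^{\vee}\rangle\lambda$, the case $\gamma=\beta$ gives $-\lambda<0$ automatically, and any $\gamma$ not linked to $\beta$ returns the (negative) $\mu$-coefficient. For $\gamma$ linked to $\beta$ the outcome depends on whether $\gamma$ equals $\alpha_0$, equals $\alpha_1'$, or is a third root: the first gives $A\lambda+(c_{\alpha_1}-2)<0$ with $A=-\langle\beta,\alpha_0^{\vee}\rangle$, the second gives $B\lambda-2<0$ with $B=-\langle\beta,\alpha_1'^{\vee}\rangle$ — precisely the two lines of (1)(c) — while a third linked root would produce $-1+\lvert\langle\beta,\gamma^{\vee}\rangle\rvert\lambda\geq 0$ (since $\lambda\geq 1$ and the linking coefficient has absolute value $\geq 1$) and thus destroy antidominance. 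Forbidding such a root is exactly condition (1)(b). Assembling these computations shows that $s_{\beta}\mu$ is strictly antidominant if and only if (1)(b) and (1)(c) hold, which together with (1)(a) gives the claimed equivalence; one checks against the worked example (V)(1), where the surviving coefficient $a_1-1$ at $\chi_3$ illustrates exactly the failure of (1)(b).

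The hard part will be the bookkeeping with the various conventions rather than the geometry. I would need to verify that the imaginary-root cases fold consistently into the same inequalities: when $\alpha_0$ or $\alpha_1$ is imaginary the corresponding $\varpi$ is replaced by $0$ or $\mathrm{id}_{\mathbb{C}^*}$ and $A$ or $B$ is set to $0$, and these must be seen to reduce the two inequalities of (1)(c) to harmless or already-subsumed conditions. I would also have to confirm throughout that $\alpha_0$, $\beta$ and $\alpha_1'$ are genuinely distinct simple roots, so that the coefficient read-off from $\mu$ is legitimate, and to note that the exclusion of cases (XVIII)(8) and (XVIII)(8)' is exactly what guarantees $c_{\alpha_1}-2\leq 0$: there $c_{\alpha_1}$ may equal $3$, producing a second positive coefficient and invalidating the reduction to a single simple reflection.
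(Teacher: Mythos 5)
Your proposal is correct and follows essentially the same route as the paper's proof: you compute $\mu=w_0^{\alpha_0,\beta}(\chi_Y)-\rho$ via Lemma~\ref{calcul}, invoke Remark~\ref{rem:uniquecoeffpositif} (with Lemma~\ref{lem:valeurs_c} ruling out $c_{\alpha_1}-2>0$ in the studied cases) to force the unique positive coefficient onto $\varpi_\beta$ and obtain (1)(a), and then evaluate $\langle s_\beta\mu,\chi_j^\vee\rangle$ in the three cases $\chi_j=\beta$, $\chi_j$ orthogonal to $\beta$, $\chi_j$ linked to $\beta$, which produces exactly conditions (1)(b) and (1)(c). The only difference is that the paper explicitly carries out the three imaginary-root subcases (replacing $\chi_Y$ by the truncated character and redoing the computation), which you correctly identify but leave as bookkeeping.
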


\begin{rem}\begin{itemize}
    \item We can also write condition (c) into three distinct cases: \\
$\ast$ $\lambda\geq 2$, $c_{\alpha_1}\in\{0,1\}$ and $A=B=0$;\\
$\ast$ $\lambda$ = $1$, $c_{\alpha_1}$ = $0$, $A$ and $B$ in $\{0,1\}$;\\
$\ast$ $\lambda$ = $1$, $c_{\alpha_1}$ = $1$, $A$ = $0$ and $B$ in $\{0,1\}$.
\item In imaginary cases, we can rewrite Theorem~\ref{theo1} by replacing 
(1)(a) by $\lambda>0$ and (1)(c) by $B\lambda-2<0$, $A\lambda-2<0$ or an empty condition respectively in the three types of imaginary cases considered in the proof of the theorem.
\item If $H^1(Y,N_{Y/\mathbb{X}})$ is not trivial and  $\lambda\geq 2$, then $A=B=0$ (including imaginary cases). Then, with condition (1)(b), it means that $\beta$ is orthogonal to any other simple roots and hence $G_0=\operatorname{SL}_2$. In all other cases, $\lambda$ has to be 1 and then only one value of $a_1$ can give a non trivial $H^1(Y,N_{Y/\mathbb{X}})$. 

Note also that if $G_0=\operatorname{SL}_2$ then the conditions in (1) are all satisfied if and only if  $a_1\geq 2$ (because $\lambda=a_1-1$, $c_{\alpha_1}=0$ and $A=B=0$).
\end{itemize}
 \end{rem}

\begin{proof} \justifying Applying step 2 of our strategy, in the case where $\alpha_0$ and $\alpha_1$ are simple roots (not imaginary roots) we have : \\

$w_0^{\alpha_0,\beta}(\chi_Y)-\rho$ = $-\varpi_{\alpha_0}+c_{\alpha_1}\varpi_{\alpha_0}+d_{\alpha_1}\varpi_{\beta}-\varpi_{\alpha_1'}+a_1\varpi_{\beta}-\sum\limits_{l=1}^m\varpi_{\chi_l}$ 

\setlength{\parindent}{3.1cm} = $(c_{\alpha_1}-2)\varpi_{\alpha_0}-2\varpi_{\alpha_1'}+(d_{\alpha_1}+a_1-1)\varpi_{\beta}-\sum\limits_{\substack{l=1 \\ \chi_l\neq\beta \\ \chi_l\neq\alpha_0 \\ \chi_l\neq\alpha_1'}}^m\varpi_{\chi_l}$. \newline

\justifying According to Remark~\ref{rem:uniquecoeffpositif}, we are reduced to the two following possibilities : \\

\setlength{\parindent}{1cm} $\bullet$ either $c_{\alpha_1}-2<0$ and $d_{\alpha_1}+a_1-1>0$,

\setlength{\parindent}{1cm} $\bullet$ or $c_{\alpha_1}-2>0$ and $d_{\alpha_1}+a_1-1<0$. \newline

\justifying But, by Lemma~\ref{lem:valeurs_c}, $c_{\alpha_1}-2>0$ can only occur in types $E_7$ and $E_8$ with $\beta$ equal to $\chi_7$ or $\chi_8$. In all the cases studied, we do not have this situation. \newline

Hence $H^1(\mathbb{X},T_{\mathbb{X}})$ not trivial implies that $c_{\alpha_1}-2<0$ and $d_{\alpha_1}+a_1-1>0$. In particular, $\beta$ is the simple root associated to the positive coefficient.

We now apply step 3 of our strategy: we have to check if $s_\beta(w_0^{\alpha_0,\beta}(\chi_Y)-\rho)$ is in the interior of the antidominant chamber or not.

We compute, for any simple root $\chi_j$, \\

$$(\ast_1):=\langle s_{\beta}(w_0^{\alpha_0,\beta}(\chi_Y)-\rho),{\chi_j}^{\vee}\rangle = \langle w_0^{\alpha_0,\beta}(\chi_Y)-\rho,s_{\beta}({\chi_j}^{\vee})\rangle$$


$$= (c_{\alpha_1}-2)\langle\varpi_{\alpha_0},s_{\beta}(\overset{\vee}{\chi_j})\rangle-2\langle\varpi_{\alpha_1'},s_{\beta}(\overset{\vee}{\chi_j})\rangle+(d_{\alpha_1}+a_1-1)\langle\varpi_{\beta},s_{\beta}(\overset{\vee}{\chi_j})\rangle-\sum\limits_{\substack{l=1 \\ \chi_l\neq\beta \\ \chi_l\neq\alpha_0 \\ \chi_l\neq\alpha_1'}}^m\langle\varpi_{\chi_l},s_{\beta}({\chi_j}^{\vee})\rangle$$.  \\

We can distinguish three distinct cases : \\

$\bullet$ (i) $\chi_j$ = $\beta$ 

$\bullet$ (ii) $\chi_j$ and $\beta$ are orthogonal (ie they are not linked on the Dynkin diagram, or equivalently $\langle\beta,\chi_j^\vee\rangle=0$).

$\bullet$ (iii) $\chi_j$ and $\beta$ are linked on the Dynkin diagram (ie $\langle\beta,\chi_j^\vee\rangle\leq 1$). \\

In the case (i), we have : \\

$(\ast_1)$  = -$(d_{\alpha_1}+a_1-1)$ \newline

\justifying In the case (ii), we have $s_\beta(\chi_j^\vee)=\chi_j^\vee$ : \\

$(\ast_1)$ is $c_{\alpha_1}-2$ if $\chi_j=\alpha_0$, $-2$ if  $\chi_j=\alpha_1'$, and $-1$ else. \newline

\justifying Finally, in the case (iii), we have $s_\beta(\chi_j^\vee)=\chi_j^\vee-\langle\beta,\chi_j^\vee\rangle\beta^\vee$ so that: \\

$$(\ast_1)  = \left\{\begin{array}{ccc}
(c_{\alpha_1}-2)-\langle\beta,\alpha_0^{\vee}\rangle(d_{\alpha_1}+a_1-1) & \mbox{if} & \chi_j = \alpha_0 \\
-2-\langle\beta,\alpha_1'^{\vee}\rangle(d_{\alpha_1}+a_1-1) & \mbox{if} & \chi_j = \alpha_1' \\
-1-\langle\beta,\chi_j^{\vee}\rangle(d_{\alpha_1}+a_1-1) & \mbox{else} &
\end{array}\right.$$ \newline

In particular, if there exists $\chi_j$, distinct from $\alpha_0$ and $\alpha_1'$, with $\langle\beta,\chi_j^\vee\rangle\leq 1$, then we get a non-negative coefficient $-1-\langle\beta,\chi_j^\vee\rangle(d_{\alpha_1}+a_1-1)$ (with the hypothesis that $d_{\alpha_1}+a_1-1>0$).

When at least one of $\alpha_0$ or $\alpha_1$ is imaginary, we can apply Borel-Weil-Bott Theorem to the semi-simple part of $G$ and then replace $\chi_Y$ the character obtained from $\chi_Y$ by deleting the terms $\varpi_{\alpha_i}$ for which $\alpha_i$ is imaginary. Then, we do the same computations, without the terms in $\varpi_{\alpha_0}$ or, $\varpi_{\alpha_1}$ and $\varpi_{\alpha_1}'$ (depending of the cases).
Let us precise the main steps in the three possible cases.

\begin{itemize}
    \item If $\alpha_0$ is imaginary but not $\alpha_1$, we replace $\chi_Y$ by $$-2\varpi_{\alpha_1'}+(d_{\alpha_1}+a_1-1)\varpi_\beta-\sum\limits_{\substack{l=1 \\ \chi_l\neq\beta \\ \chi_l\neq\alpha_1'}}^m\varpi_{\chi_l}.$$ 
    In particular, to get not trivial $H^1(Y,N_{Y/\mathbb{X}})$ we need that $\lambda:=d_{\alpha_1}+a_1-1>0$. (Here, $d_{\alpha_1}$ is the coefficient of $\alpha_1$ in $w_0^\beta(\beta^\vee)$.) \\
    Moreover $(\ast_1)$  becomes $$ -2\langle\varpi_{\alpha_1'},s_{\beta}(\overset{\vee}{\chi_j})\rangle+(d_{\alpha_1}+a_1-1)\langle\varpi_{\beta},s_{\beta}(\overset{\vee}{\chi_j})\rangle-\sum\limits_{\substack{l=1 \\ \chi_l\neq\beta  \\ \chi_l\neq\alpha_1'}}^m\langle\varpi_{\chi_l},s_{\beta}({\chi_j}^{\vee})\rangle$$ 
    which is $-\lambda$ in case (i), $-2$ if $\chi_j=\alpha_1'$ and $-1$ else in case (ii), and $-2+B\lambda$ if $\chi_j=\alpha_1'$ or $-1-\langle \beta,\chi_j^\vee\rangle\lambda$ else in case (iii).
    
    \item
    If $\alpha_1$ is imaginary but not $\alpha_0$, we replace $\chi_Y$ by $$-2\varpi_{\alpha_0}+(a_1-1)\varpi_\beta-\sum\limits_{\substack{l=1 \\ \chi_l\neq\beta \\ \chi_l\neq\alpha_0}}^m\varpi_{\chi_l}.$$ In particular, to get not trivial $H^1(Y,N_{Y/\mathbb{X}})$ we need that $\lambda:=a_1-1>0$.\\
    Moreover $(\ast_1)$  becomes
    $$-2\langle\varpi_{\alpha_0},s_{\beta}(\overset{\vee}{\chi_j})\rangle+(a_1-1)\langle\varpi_{\beta},s_{\beta}(\overset{\vee}{\chi_j})\rangle-\sum\limits_{\substack{l=1 \\ \chi_l\neq\beta \\ \chi_l\neq\alpha_0}}^m\langle\varpi_{\chi_l},s_{\beta}({\chi_j}^{\vee})\rangle$$
    which is $-\lambda$ in case (i), $-2$ if $\chi_j=\alpha_0$ and $-1$ else in case (ii), and $-2+A\lambda$ if $\chi_j=\alpha_0$ or $-1-\langle \beta,\chi_j^\vee\rangle\lambda$ else in case (iii).
    \item
    If both $\alpha_0$ and $\alpha_1$ are imaginary, we replace $\chi_Y$ by $$(a_1-1)\varpi_\beta-\sum\limits_{\substack{l=1 \\ \chi_l\neq\beta \\ \chi_l\neq\alpha_1'}}^m\varpi_{\chi_l}.$$ In particular, to get not trivial $H^1(Y,N_{Y/\mathbb{X}})$ we need that $\lambda:=a_1-1>0$.\\
    Moreover $(\ast_1)$  becomes
    $$ (a_1-1)\langle\varpi_{\beta},s_{\beta}(\overset{\vee}{\chi_j})\rangle-\sum\limits_{\substack{l=1 \\ \chi_l\neq\beta \\ \chi_l\neq\alpha_0 \\ \chi_l\neq\alpha_1'}}^m\langle\varpi_{\chi_l},s_{\beta}({\chi_j}^{\vee})\rangle$$
    which is $-(a_1-1)=-\lambda$ in case (i), $-1$ in case (ii), and $-1-\langle \beta,\chi_j^\vee\rangle\lambda$ in case (iii).
\end{itemize}\end{proof}

If we now deal with the closed $G$-orbit $Z$, which is isomorphic to $G/P_{\alpha_1,\beta}$, then we have the following.

\justifying\begin{teo}\label{theo2}\label{th:equivalenceZ}
Let $\mathbb{X}$ be one of the smooth horospherical varieties of Picard number 2, that is not in case (XVIII) (8) or (XVIII) (8)'. The following assertions are equivalent : \\

(1)(a) $\lambda$ = $d_{\alpha_0}-a_1-1>0$ and $2-c_{\alpha_0}>0$, and

\setlength{\parindent}{1.2cm} (b) $\beta$ is linked to at most $\alpha_1$ and $\alpha_0'$, and

\setlength{\parindent}{1.2cm} (c) $\left\lbrace\begin{array}{c}
A\lambda+(c_{\alpha_0}-2)<0 \\

B\lambda-2<0 \\
\end{array}\right.$ \\

\justifying where $A$ = $-\langle{\beta},{\alpha_1}^{\vee}\rangle$ or $0$ if $\alpha_1$ is imaginary, and $B$ = $-\langle{\beta},{\alpha_0'}^{\vee}\rangle$ or $0$ if $\alpha_0$ is imaginary, \\

(2) $H^1(Z,N_{Z/\mathbb{X}})$ is not trivial. \\
\end{teo}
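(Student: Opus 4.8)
The plan is to mirror the argument for Theorem~\ref{theo1}, exploiting the symmetry between the two closed orbits. Since $Z\cong G/P_{\alpha_1,\beta}$ and the weight describing the total space of $N_{Z/\mathbb{X}}$ is $\chi_Z=\varpi_{\alpha_0}-\varpi_{\alpha_1}-a_1\varpi_\beta=-\chi_Y$, the entire computation is obtained from the $Y$-case by interchanging the roles of $\alpha_0$ and $\alpha_1$ (thus using $w_0^{\alpha_1,\beta}$ in place of $w_0^{\alpha_0,\beta}$, and $c_{\alpha_0},d_{\alpha_0},\alpha_0'$ in place of $c_{\alpha_1},d_{\alpha_1},\alpha_1'$) together with the sign change forced by $\chi_Z=-\chi_Y$. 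I stress already that this symmetry is \emph{not} perfect: the term $+a_1\varpi_\beta$ in $\chi_Y$ becomes $-a_1\varpi_\beta$ in $\chi_Z$, so the relevant parameter will be $\lambda=d_{\alpha_0}-a_1-1$ rather than $d_{\alpha_1}+a_1-1$.

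First I would carry out step~2 of the strategy. Using Lemma~\ref{calcul} in the form $w_0^{\alpha_1,\beta}(\varpi_{\alpha_0})=c_{\alpha_0}\varpi_{\alpha_1}+d_{\alpha_0}\varpi_\beta-\varpi_{\alpha_0'}$ together with $w_0^{\alpha_1,\beta}(\varpi_{\alpha_1})=\varpi_{\alpha_1}$ and $w_0^{\alpha_1,\beta}(\varpi_\beta)=\varpi_\beta$, I obtain
$$w_0^{\alpha_1,\beta}(\chi_Z)-\rho=(c_{\alpha_0}-2)\varpi_{\alpha_1}+(d_{\alpha_0}-a_1-1)\varpi_\beta-2\varpi_{\alpha_0'}-\sum\limits_{\substack{l=1 \\ \chi_l\neq\beta \\ \chi_l\neq\alpha_1 \\ \chi_l\neq\alpha_0'}}^{m}\varpi_{\chi_l},$$
which is term-by-term the mirror of the formula for $Y$, with $\lambda=d_{\alpha_0}-a_1-1$. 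By Remark~\ref{rem:uniquecoeffpositif} at most one simple root may carry a positive coefficient, and the only candidates are $\alpha_1$ (coefficient $c_{\alpha_0}-2$) and $\beta$ (coefficient $\lambda$). Invoking Lemma~\ref{lem:valeurs_c} to exclude $c_{\alpha_0}>2$ in the cases under consideration, non-triviality of $H^1(Z,N_{Z/\mathbb{X}})$ forces $2-c_{\alpha_0}>0$ and $\lambda>0$, giving condition~(1)(a), and $\beta$ is then the unique positive-coefficient root.

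Next I would apply step~3 and compute $\langle s_\beta(w_0^{\alpha_1,\beta}(\chi_Z)-\rho),\chi_j^\vee\rangle$ for every simple root $\chi_j$, splitting as before into the cases $\chi_j=\beta$, $\chi_j$ orthogonal to $\beta$, and $\chi_j$ linked to $\beta$. In the linked case the distinguished entries are $(c_{\alpha_0}-2)-\langle\beta,\alpha_1^\vee\rangle\lambda$ at $\alpha_1$ and $-2-\langle\beta,\alpha_0'^\vee\rangle\lambda$ at $\alpha_0'$, whereas any other neighbour $\chi_j$ of $\beta$ contributes $-1-\langle\beta,\chi_j^\vee\rangle\lambda$, which becomes non-negative once $\lambda>0$. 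Demanding that $s_\beta(w_0^{\alpha_1,\beta}(\chi_Z)-\rho)$ lie in the interior of the antidominant chamber therefore yields exactly condition~(1)(b) (so that no such extra neighbour exists, i.e.\ $\beta$ is linked to at most $\alpha_1$ and $\alpha_0'$) and condition~(1)(c), namely $A\lambda+(c_{\alpha_0}-2)<0$ and $B\lambda-2<0$ with $A=-\langle\beta,\alpha_1^\vee\rangle$ and $B=-\langle\beta,\alpha_0'^\vee\rangle$. By Borel-Weil-Bott this interiority is equivalent to $H^1(Z,N_{Z/\mathbb{X}})\neq0$, which is the desired equivalence.

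Finally I would dispose of the imaginary cases by deleting the corresponding $\varpi$-terms from $\chi_Z$ and applying Borel-Weil-Bott to the semisimple part of $G$, exactly as in the proof of Theorem~\ref{theo1}; the three subcases ($\alpha_1$ imaginary, $\alpha_0$ imaginary, both imaginary) collapse the $A$- or $B$-inequality in accordance with the convention $A=0$ or $B=0$. Because everything is formally parallel to the $Y$-case, I do not expect a genuine new obstacle; the one point that must be checked with care is precisely the asymmetry noted above, namely that the minus sign of $\chi_Z$ is absorbed entirely into the $\beta$-coefficient (producing $d_{\alpha_0}-a_1$, not $d_{\alpha_0}+a_1$) while the coefficients of $\varpi_{\alpha_1}$ and $\varpi_{\alpha_0'}$, being fixed by Lemma~\ref{calcul}, are unaffected.
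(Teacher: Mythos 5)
Your proposal is correct and takes essentially the same approach as the paper: you compute $w_0^{\alpha_1,\beta}(\chi_Z)-\rho$ via Lemma~\ref{calcul} and then rerun the strategy of the proof of Theorem~\ref{theo1}, which is precisely what the paper does, except that the paper compresses the second half by simply observing that the resulting weight is the $Y$-case formula with the indices $0$ and $1$ exchanged and $a_1$ replaced by $-a_1$. Your explicit check of the sign asymmetry (the $\beta$-coefficient becoming $\lambda=d_{\alpha_0}-a_1-1$) is exactly the content of that substitution, so the two arguments coincide.
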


\begin{proof} Now we deal with the computation of $H^1(Z,N_{Z/\mathbb{X}})$, which begins with the computation of $w_0^{\alpha_1,\beta}(\chi_Z)$. In the case where $\alpha_0$ and $\alpha_1$ are simple roots (not imaginary roots) we have: \\

$$w_0^{\alpha_1,\beta}(\chi_Z)-\rho =-2\varpi_{\alpha_0'}+ (c_{\alpha_0}-2)\varpi_{\alpha_1}+(d_{\alpha_0}-a_1-1)\varpi_{\beta}-\sum\limits_{\substack{l=1 \\ \chi_l\neq\beta \\ \chi_l\neq\alpha_0 \\ \chi_l\neq\alpha_1'}}^m\varpi_{\chi_l}. $$

We obtain the same result as for $Y$, by exchanging indices 0 and 1 in all $\alpha$'s and by replacing $a_1$ by $-a_1$. 

And we can check that it works similarly for imaginary cases.\end{proof}

\color{black}
After studying the list of the cases, we can notice that actually there are few cases that verify the conditions of Theorem \ref{theo1} and Theorem \ref{theo2}, some for $a_1$ = $0$ and other for some positive values of $a_1$. \\

\begin{prop}\label{cor:list}
When $a_1$ = $0$, $H^1(\mathbb{X},T_{\mathbb{X}})$ is not trivial in and only in the following cases :

\begin{multicols}{2}
\setlength{\parindent}{1cm} $\bullet$ (II)( 3) 

\setlength{\parindent}{1cm} $\bullet$ (VIII) (6) 

\setlength{\parindent}{1cm} $\bullet$ (X) (15) 

\setlength{\parindent}{1cm} $\bullet$ (XI) (4) with $m$ = $3$ 

\setlength{\parindent}{1cm} $\bullet$ (XI) (7) with $k$ = $m-1$ 

\setlength{\parindent}{1cm} $\bullet$ (XVI) (9).

\end{multicols}

When $a_1>0$, $H^1(\mathbb{X},T_{\mathbb{X}})$ is not trivial in and only in the following cases :

\begin{multicols}{2}
\setlength{\parindent}{1cm} $\bullet$ (I) (3) for $a_1$ = $1$ 

\setlength{\parindent}{1cm} $\bullet$ (III) (5)' for $a_1$ = $1$ 

\setlength{\parindent}{1cm} $\bullet$ (IV) (2)'for $a_1$ = $1$

\setlength{\parindent}{1cm} $\bullet$ (IV) (5) with $k$ = $1$ for $a_1$ = $1$

\setlength{\parindent}{1cm} $\bullet$ (VIII) (5) for $a_1$ = $1$

\setlength{\parindent}{1cm} $\bullet$ (IX) (1) for $a_1$ = $2$

\setlength{\parindent}{1cm} $\bullet$ (IX) (2)' for $a_1$ = $1$

\setlength{\parindent}{1cm} $\bullet$ (IX) (5)' for $a_1$ = $1$

\setlength{\parindent}{1cm} $\bullet$ (IX) (7) with $k$ = $2$ for $a_1$ = $1$

\setlength{\parindent}{1cm} $\bullet$ (IX) (12) for $a_1$ = $1$

\setlength{\parindent}{1cm} $\bullet$ (IX) (17)' for $a_1$ = $1$

\setlength{\parindent}{1cm} $\bullet$ (X) (16) for $a_1$ = $2$

\setlength{\parindent}{1cm} $\bullet$ (XI) (1) for $a_1$ = $2$

\setlength{\parindent}{1cm} $\bullet$ (XI) (4)' for $a_1$ = $1$

\setlength{\parindent}{1cm} $\bullet$ (XVI) (11) for $a_1$ = $1$

\setlength{\parindent}{1cm} $\bullet$ (XVII) (1) for $a_1$ = $1$ (Z)

\setlength{\parindent}{1cm} $\bullet$ (XVII) (1) for $a_1$ = $2$ (Y)

\setlength{\parindent}{1cm} $\bullet$ (XVIII) (1) to (XVIII) (8), with $G_0$ of type $A_1$ and $a_1 \geq 2$

\setlength{\parindent}{1cm} $\bullet$ (XVIII) (8) for any $G_0$ (Z)

\setlength{\parindent}{1cm} $\bullet$ $G=G_0\times G_1\times G_2$ with $G_0$ of type $A_1$ and $a_1 \geq 2$.
\end{multicols}
\end{prop}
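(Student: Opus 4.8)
The plan is to reduce everything to the two numerical criteria already established. By the decomposition recorded after~(\ref{eq1}), namely $H^1(\mathbb{X},T_{\mathbb{X}})=H^1(Y,N_{Y/\mathbb{X}})\oplus H^1(Z,N_{Z/\mathbb{X}})$, the space $H^1(\mathbb{X},T_{\mathbb{X}})$ is non-trivial precisely when at least one of the two summands is non-trivial. Theorem~\ref{theo1} characterizes non-triviality of the $Y$-summand and Theorem~\ref{theo2} that of the $Z$-summand, each in terms of the integers $c_{\alpha_i},d_{\alpha_i}$, the root $\alpha_i'$, and the linkage of $\beta$ on the Dynkin diagram. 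So the whole proposition is obtained by running these two criteria over the full classification of families (I)--(XVIII) and the triple-product case from~\cite{Villeneuve2025}, and collecting the quadruples $(G,\beta,\alpha_0,\alpha_1)$ together with the admissible $a_1$ for which at least one criterion holds.

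The second step is to use the remarks after Theorem~\ref{theo1} to cut down the search drastically. For $Y$, condition (1)(a) requires $\lambda=d_{\alpha_1}+a_1-1>0$, and the third remark shows that $\lambda\geq 2$ forces $A=B=0$ and, with (1)(b), that $\beta$ is orthogonal to every other simple root, i.e. $G_0=\SL_2$. Hence for every family with $G_0\neq\SL_2$ one must have $\lambda=1$; since $d_{\alpha_1}$ is determined by the Dynkin data via Lemma~\ref{coeffs c et d} and Notation~\ref{not_c_d}, this pins down the single value $a_1=2-d_{\alpha_1}$ (when non-negative) as the only candidate, and symmetrically $a_1=d_{\alpha_0}-2$ for the $Z$-criterion of Theorem~\ref{theo2}. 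This is exactly why the list records a unique $a_1$ per family, while the genuinely infinite families ($a_1\geq 2$) occur only when $G_0=\SL_2$ --- namely the cases (XVIII)(1)--(8) and the triple-product case with $G_0$ of type $A_1$.

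The third step is the case-by-case verification itself, which I would organize by the group $G$ and the position of $\beta$. For each family I would: (i) compute $c_{\alpha_i}$ and $d_{\alpha_i}$ using Lemma~\ref{lem:valeurs_c} and the description of $w_0^{\alpha_j,\beta}$ in Lemma~\ref{coeffs c et d}; (ii) read off $\alpha_i'=w_0^{\alpha_j,\beta}(\alpha_i)$ and check the linkage condition (1)(b), that $\beta$ meets no simple root other than $\alpha_0$ and $\alpha_1'$ (resp. $\alpha_1$ and $\alpha_0'$); and (iii) verify the sign conditions (1)(c), reading $A,B$ from the Cartan integers $-\langle\beta,\alpha_0^\vee\rangle$ and $-\langle\beta,\alpha_1'^\vee\rangle$ (with the imaginary-root conventions). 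Since $c_{\alpha_i}\in\{0,1,2\}$ away from the $E_7,E_8$ exceptions of Lemma~\ref{lem:valeurs_c}, condition (1)(a) already eliminates most quadruples, and the linkage condition (1)(b) eliminates most of the rest, so that only the short list displayed survives. The detailed tables carrying this out are relegated to the appendix.

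Finally, the cases (XVIII)(8) and (XVIII)(8)' must be handled separately, because they are explicitly excluded from Theorems~\ref{theo1} and~\ref{theo2}: here $G_1$ is of type $G_2$ and $c_{\alpha_i}\in\{1,3\}$ by Lemma~\ref{lem:valeurs_c}, so the dichotomy ``$c_{\alpha_1}-2>0$ or $c_{\alpha_1}-2<0$'' used in those proofs no longer isolates a single positive coefficient. For these I would redo the Borel-Weil-Bott computation of step~3 directly on the $G_2$ diagram, which produces the extra entry (XVIII)(8) for any $G_0$ on the $Z$-side. The main obstacle is precisely this bookkeeping at scale: the borderline subcases of (1)(c) with $\lambda=1$ and $A,B\in\{0,1\}$ (where an inequality degenerates to an equality, placing us on a wall of the antidominant chamber and forcing vanishing) require care, and confirming that no family outside the stated list slips through demands a complete and organized traversal of the classification rather than any single clever argument.
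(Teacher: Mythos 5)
Your proposal is correct and takes essentially the same approach as the paper: it reduces to Theorems~\ref{theo1} and~\ref{theo2} through the decomposition $H^1(\mathbb{X},T_{\mathbb{X}})=H^1(Y,N_{Y/\mathbb{X}})\oplus H^1(Z,N_{Z/\mathbb{X}})$, uses the same key reductions (for $G_0\neq\SL_2$ one is forced to $\lambda=1$, hence the unique candidates $a_1=2-d_{\alpha_1}$ for $Y$ and $a_1=d_{\alpha_0}-2$ for $Z$, while $a_1\geq 2$ infinite families occur only for $G_0=\SL_2$) to organize the case traversal carried out in the appendix, and correctly isolates (XVIII)(8) and (XVIII)(8)' for a separate direct Borel--Weil--Bott computation since $c_{\alpha_i}\in\{1,3\}$ there breaks the dichotomy underlying the two theorems. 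This matches the paper's proof, which likewise defers the systematic bookkeeping to the appendix and treats (XVIII)(8), (XVIII)(8)' by hand, obtaining in particular $H^1(Z,N_{Z/\mathbb{X}})=V(a_1\varpi_\beta)$ via the reflection $s_{\alpha_1}$ on the $Z$-side.
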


\begin{proof}
We apply Theorem \ref{theo1} and Theorem \ref{theo2}, see the appendix for the details and the way to deal with a lot of cases with as few work as possible.

There are two cases left : (XVIII) (8) and (XVIII) (8)' (with $a_1>0$).

In case (XVIII) (8) $$w_0^{\alpha_0,\beta}(\chi_Y)-\rho=-\varpi_{\alpha_0}-2\varpi_{\alpha_1}+(a_1-1)\varpi_\beta-\sum\limits_{\substack{\chi_l\in S \\ \chi_l\neq\beta \\ \chi_l\neq\alpha_0 \\ \chi_l\neq\alpha_1}}\varpi_{\chi_l}$$
and, as for other cases (XVIII), we have non zero $H^1$ when and only when $G_0$ of type $A_1$ and $a_1\geq 2$. And  $$w_0^{\alpha_1,\beta}(\chi_Z)-\rho=-2 \varpi_{\alpha_0}+\varpi_{\alpha_1}-(a_1+1)\varpi_\beta-\sum\limits_{\substack{\chi_l\in S \\ \chi_l\neq\beta \\ \chi_l\neq\alpha_0 \\ \chi_l\neq\alpha_1}}\varpi_{\chi_l}$$ and applying $s_{\alpha_1}$ we get a weight in the strict antidominant chamber for any $a_1\geq 1$, and then a non trivial $H^1$ (for any type of $G_0$). Indeed, $s_{\alpha_1}(-2 \varpi_{\alpha_0}+\varpi_{\alpha_1})=- \varpi_{\alpha_0}-\varpi_{\alpha_1}$, so that $H^1(Z,N_{Z/\mathbb{X}})=V(a_1\varpi_\beta)$.

In case (XVIII) (8)' $$w_0^{\alpha_0,\beta}(\chi_Y)-\rho=\varpi_{\alpha_0}-2\varpi_{\alpha_1}+(a_1-1)\varpi_\beta-\sum\limits_{\substack{\chi_l\in S \\ \chi_l\neq\beta \\ \chi_l\neq\alpha_0 \\ \chi_l\neq\alpha_1}}\varpi_{\chi_l}$$ and we cannot have non trivial $H^1$ for $a_1>0$. And $$w_0^{\alpha_1,\beta}(\chi_Z)-\rho=-2 \varpi_{\alpha_0}-\varpi_{\alpha_1}-(a_1+1)\varpi_\beta-\sum\limits_{\substack{\chi_l\in S \\ \chi_l\neq\beta \\ \chi_l\neq\alpha_0 \\ \chi_l\neq\alpha_1}}\varpi_{\chi_l}$$ and we get non trivial $H^0$ but trivial $H^1$. \\

\end{proof}

\begin{rem}
In case (XVIII) (8), we can have not trivial $H^1$ and $\beta$ linked to simple roots different from $\alpha$ or $\alpha'$ (when $G_0$ can be of any type). In particular, that case does not satisfy Theorems~\ref{theo1} and~\ref{theo2}.

Also note that in case (XVIII) (8) with $G_0$ of type $A_1$ and $a_1\geq 2$, we have some non trivial $H^1$ from both orbits $Y$ and $Z$. They are the only cases where it occurs.
\end{rem}

\section{Obstructions to local deformations?}\label{sec3}

In the cases where $H^1(\mathbb{X},T_\mathbb{X})$ is not trivial, we prove the following result, which says that there are almost always no obstruction to local deformations.

\begin{teo}\label{th:obstruction}
Let $X$ be a smooth rank one horospherical variety of Picard number two such that $H^1(\mathbb{X},T_\mathbb{X})$ is not trivial. Then, there is no obstruction to local deformations except eventually in the case case (XVIII) (8), with $G_0=\operatorname{SL}_3$ and $a_1\geq 3$.
\end{teo}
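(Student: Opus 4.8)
The obstructions to local deformations of $\mathbb{X}$ lie in $H^2(\mathbb{X},T_{\mathbb{X}})$, so the plan is to prove that this group vanishes in every non-rigid case except the announced one; vanishing of the obstruction space immediately gives unobstructedness. First I would repeat the reduction of Section~\ref{sec1}: the exact sequence $0\to S_{\mathbb{X}}\to T_{\mathbb{X}}\to N_{\partial\mathbb{X}/\mathbb{X}}\to 0$ together with the vanishing $H^i(\mathbb{X},S_{\mathbb{X}})=0$ for $i>0$ of \cite[Th.0.4]{Pasquier2010} yields, exactly as for \eqref{eq1},
\[
H^2(\mathbb{X},T_{\mathbb{X}})=H^2(Y,N_{Y/\mathbb{X}})\oplus H^2(Z,N_{Z/\mathbb{X}}).
\]
Thus it suffices to compute the second cohomology of the two homogeneous line bundles $\nu(\chi_Y)$ and $\nu(\chi_Z)$ on $Y=G/P_{\alpha_0,\beta}$ and $Z=G/P_{\alpha_1,\beta}$, which I would do with Borel--Weil--Bott.

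The decisive input is that Borel--Weil--Bott concentrates the cohomology of each $\nu(\chi)$ in the single degree $\ell(w)$, where $w$ carries $w_0^P(\chi)-\rho$ into the antidominant chamber (and in no degree at all when that weight is singular). Hence, on whichever closed orbit already carries the non-trivial $H^1(\mathbb{X},T_{\mathbb{X}})$, the cohomology lives in degree $1$ and its $H^2$ vanishes for free. All the work is therefore on the complementary orbit: I would run steps (1)--(3) of Section~\ref{sec1} on the relevant weight ($\chi_Z$ when the non-rigidity comes from $Y$, and symmetrically) and pin down the degree in which its cohomology sits. In the vast majority of the finitely many cases of Proposition~\ref{cor:list} this weight is, before any reflection, either strictly antidominant (so $H^0\neq 0$ and $H^2=0$) or singular (so all cohomology dies); both give $H^2=0$. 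In particular, whenever $G_0=\operatorname{SL}_2$ the root $\beta$ has no neighbour at all, a single positive coefficient can only sit at $\beta$, one application of $s_\beta$ already reaches antidominance, and degree $2$ cannot occur.

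The genuine difficulty is to certify that the cohomological degree of the complementary orbit is never exactly $2$ outside the exceptional family, and this is where the $\rho$-shifted reflection combinatorics must be followed carefully through the list (the bookkeeping being deferred to the appendix). Degree $2$ can arise only through a ``chain'' mechanism: the first reflection $s_\beta$ must turn the coefficient $-1$ at a neighbour $\gamma$ of $\beta$ inside $G_0$ into a strictly positive value $-1+|\langle\beta,\gamma^\vee\rangle|\lambda$, and a second reflection $s_\gamma$ must then complete the passage to the strictly antidominant chamber without creating any new positive coefficient. Condition (b) of Theorems~\ref{theo1} and~\ref{theo2} severely limits the neighbours of $\beta$ on the orbit carrying $H^1$, but on the complementary orbit this constraint may fail; tracking the finitely many configurations I would verify that the chain balance survives only in case (XVIII)(8). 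There the $G_2$-specific coefficient $c_{\alpha_0}=3$ of Lemma~\ref{lem:valeurs_c} makes $H^1(Z,N_{Z/\mathbb{X}})\neq 0$ for \emph{every} type of $G_0$, which is exactly what opens the regime $G_0=\operatorname{SL}_3$ in which the complementary orbit $Y$ can acquire degree-$2$ cohomology.

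I would then settle the exceptional case by an explicit computation. Taking $G_0=\operatorname{SL}_3$, with $\beta$ and its unique neighbour $\gamma$ the two simple roots of $A_2$, I start from
\[
w_0^{\alpha_0,\beta}(\chi_Y)-\rho=-\varpi_{\alpha_0}-2\varpi_{\alpha_1}+(a_1-1)\varpi_\beta-\varpi_{\gamma}
\]
and check that $s_\gamma s_\beta$ sends this regular weight into the strictly antidominant chamber precisely when $a_1\geq 3$ (for $a_1\leq 2$ an intermediate coefficient vanishes and $H^2=0$), so that $H^2(Y,N_{Y/\mathbb{X}})\neq 0$ and the vanishing argument genuinely breaks down. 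For $G_0$ of type $A_m$ with $m\geq 3$ the same chain propagates to a further neighbour, so either the weight becomes singular or the degree exceeds $2$; this confirms that $\operatorname{SL}_3$ is the only possibility. A useful cross-check, worth recording as a remark, is that whenever $\mathbb{X}$ is Fano the Akizuki--Kodaira--Nakano vanishing applied to $T_{\mathbb{X}}\cong\Omega_{\mathbb{X}}^{n-1}\otimes K_{\mathbb{X}}^{-1}$ already forces $H^i(\mathbb{X},T_{\mathbb{X}})=0$ for all $i\geq 2$, and the exceptional family is exactly one in which this positivity is lost. The main obstacle I anticipate is not any single computation but the exhaustiveness of the degree count over the whole list, ensuring that no other borderline configuration secretly lands in degree $2$.
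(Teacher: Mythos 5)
Your proposal is correct and lands on the same exceptional case, but its architecture is genuinely different from the paper's for the bulk of the list. The paper dispatches most cases \emph{without} computing any $H^2$: it shows that all $a_1=0$ cases are Fano and (almost all) $a_1=1$ cases are weak Fano, so unobstructedness follows from Kodaira--Nakano, respectively from Sano's theorem \cite[Th.1.1]{Sano2013} --- note that in the weak Fano cases this yields unobstructedness directly, \emph{not} the vanishing of $H^2(\mathbb{X},T_\mathbb{X})$; it then observes that a two-orbit variety with $H^1(\mathbb{X},T_\mathbb{X})\neq 0$ has nontrivial $H^0$ on one closed orbit and nontrivial $H^1$ on the other, so Bott concentration kills $H^2$; and only a handful of leftover three-orbit, non-weak-Fano cases are computed by hand, with the two-reflection analysis reserved for case (XVIII)(8). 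You instead run Borel--Weil--Bott uniformly on both normal bundles through the reduction $H^2(\mathbb{X},T_{\mathbb{X}})=H^2(Y,N_{Y/\mathbb{X}})\oplus H^2(Z,N_{Z/\mathbb{X}})$, which is valid (the paper uses it implicitly) since $H^i(\mathbb{X},S_{\mathbb{X}})=0$ for all $i>0$. Your ``chain'' analysis of when degree $2$ can occur coincides with the paper's treatment of (XVIII)(8): reduction to $s_{\chi_l}s_\beta$ with $\chi_l$ linked to $\beta$, the coefficient $2-a_1$ at $\varpi_{\chi_l}$ forcing $a_1\geq 3$, the simple-arrow and no-further-neighbour conditions forcing $G_0$ of type $A_2$ (the paper's ``type $A_3$'' in the proof is a slip: the module it exhibits is an $\operatorname{SL}_3\times G_2$-module, consistent with your computation and with the theorem's statement). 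What your route buys: it is self-contained (no appeal to Sano's theorem nor to the Fano/weak-Fano criterion, which you correctly demote to a cross-check) and it would prove the stronger statement $H^2(\mathbb{X},T_\mathbb{X})=0$ in every non-exceptional case, where the paper only asserts unobstructedness. What it costs: the assertion that the chain mechanism ``survives only in case (XVIII)(8)'' is a finite case-check over Proposition~\ref{cor:list} that you announce but do not execute; since you identify the correct mechanism, the correct dichotomy (strictly antidominant or singular complementary weight in the easy cases), and the correct borderline computation, this is a completeness debt of the same routine kind as the paper's appendix rather than a flaw in the method.
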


The strategy of the proof of Theorem~\ref{th:obstruction} is the following : we first select Fano or weak Fano cases, then we consider two-orbit cases, and finally  we do a case by case study for the few cases left (only one case really needs work).\\

It is well-known that deformations of Fano varieties (ie with ample anticanonical divisor) are unobstructed (immediate consequence of Kodaira-Nakano vanishing) and it was recently proved that deformations of weak Fano varieties (ie with nef and big anticanonical divisor) are also unobstructed \cite[Th.1.1]{Sano2013}. In several cases, we can observe that $\mathbb{X}$ is Fano or weak Fano, and then deduce there is no obstruction to local deformations.

For $\Qbb$-factotial spherical varieties (and then horospherical varieties), anticanonical divisors are known \cite{brioncurves}, and there exists an ample criterion, as a globally generated one \cite{briondiv}. Moreover, for these varieties, nef and globally generated is equivalent. Also, any effective divisor on a spherical is linearly equivalent (or numerically equivalent) to an effective $B$-stable divisor, the effective cone is generated by the classes of the prime $B$-stable divisors (which are finitely many); and an anticanonical divisor of spherical varieties is a linear combinaison of the prime $B$-stable divisors with positive coefficients. Hence the anitcanonical divisor of a $\Qbb$-factorial spherical variety is always big.

We then deduce the following result as a particular case of \cite{Pasquier2006}. For any $\gamma\in S$, we define $R_\mathbb{X}^+$ the set of positive root generated by simple roots in $S\backslash\{\beta,\alpha_0,\alpha_1\}$ and  $$a_\gamma=2-\langle \sum_{\delta\in R^+_\mathbb{X}}\delta,\gamma^\vee\rangle (\geq 2).$$ And we set $a_\gamma=1$ if $\gamma$ is imaginary.\\

\begin{prop}
Let $\mathbb{X}=\mathbb{X}(G,\beta,\alpha_0,\alpha_1,a_1)$ be a smooth projective rank one horospherical variety with Picard number two. 

Then $\mathbb{X}$ is Fano (resp. weak Fano) if and only if $a_1a_{\alpha_1}<a_\beta$ (resp. $a_1a_{\alpha_1}\leq a_\beta$).
\end{prop}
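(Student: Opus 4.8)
The plan is to read off the Fano and weak Fano conditions from the position of the anticanonical class in the two-dimensional Néron--Severi space, combining Brion's anticanonical formula with the combinatorial positivity criteria for spherical varieties recalled just above; this is the specialization to our family of the general criterion of \cite{Pasquier2006}. Since $\mathbb{X}$ is smooth, hence $\Qbb$-factorial, and its anticanonical divisor is always big (as noted above), being weak Fano is equivalent to $-K_\mathbb{X}$ being nef, and for these varieties nef coincides with globally generated; so it suffices to decide when $-K_\mathbb{X}$ is ample (Fano) and when it is merely nef (weak Fano). Because $\operatorname{Pic}(\mathbb{X})$ has rank two, both questions reduce to testing $-K_\mathbb{X}$ against the two generators of the Mori cone: ampleness is strict positivity on both extremal rays, nefness is non-negativity on both.

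Concretely, I would first write $-K_\mathbb{X}=X_++X_-+\sum_{\gamma}a_\gamma D_\gamma$, where $X_+,X_-$ are the two $G$-stable prime divisors, $D_\gamma$ (for $\gamma\in\{\alpha_0,\alpha_1,\beta\}$) the colors, and $a_\gamma$ the coefficients defined before the statement, with $a_\gamma=1$ in the imaginary cases. I then identify the two extremal rays geometrically. The first is the class of the closure of a generic one-parameter torus orbit: this is a $\Pbb^1$ meeting each of $X_+$ and $X_-$ transversally once and disjoint from every color, so $-K_\mathbb{X}$ meets it in $2>0$; hence this ray never obstructs positivity. The second ray is represented by a $\beta$-Schubert line $C$ contained in a closed orbit, the analogue of the negative section of a Hirzebruch surface, which is exactly the $G_0=\operatorname{SL}_2$ case. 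The whole content is the computation of $-K_\mathbb{X}\cdot C$: using the linear relation between $X_+$, $X_-$ and the colors coming from the generator of the rank-one weight lattice (the relation in which the twist $a_1$ appears as the coefficient of $D_\beta$), this number should simplify to $a_\beta-a_1 a_{\alpha_1}$. Granting this, Fano $\iff -K_\mathbb{X}\cdot C>0 \iff a_1 a_{\alpha_1}<a_\beta$, and weak Fano $\iff -K_\mathbb{X}\cdot C\ge 0 \iff a_1 a_{\alpha_1}\le a_\beta$, which is the statement.

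The main obstacle is precisely the identity $-K_\mathbb{X}\cdot C=a_\beta-a_1 a_{\alpha_1}$, that is, explaining why the color coefficient $a_{\alpha_1}$ (and not $a_{\alpha_0}$) multiplies the twist $a_1$. The asymmetry should come from the sign of $a_1$ in $\chi_Y$ versus $\chi_Z$, so that the bundle is twisted towards one of the two closed orbits, and from $C$ lying in $Z=G/P_{\alpha_1,\beta}$, whose Levi factor is governed by $\alpha_1$; correctly bookkeeping the contributions of $D_{\alpha_0},D_{\alpha_1},D_\beta$ both to $C$ and to the linear relation is where the product emerges, and this is the step I expect to require genuine care. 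Two further routine but necessary points are the imaginary-root cases, handled uniformly through the convention $a_\gamma=1$ together with $\varpi_{\alpha_0}=0$ and $\varpi_{\alpha_1}=\operatorname{id}_{\Cbb^*}$, and the verification that the torus-fiber ray is indeed extremal and $-K_\mathbb{X}$-positive in every case, so that the single binding condition is the one found; the big-ness of $-K_\mathbb{X}$, which gives weak Fano $=$ nef, is the input that lets the nef boundary be detected by this one inequality. As a consistency check I would specialize to $G_0=\operatorname{SL}_2$, where $a_\beta=2$ and $a_{\alpha_1}=1$, recovering the classical fact that the Hirzebruch variety is Fano iff $a_1\le 1$ and weak Fano iff $a_1\le 2$.
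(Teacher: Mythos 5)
Your overall shape (reduce to positivity of $-K_{\mathbb{X}}$ on the two extremal rays, use bigness plus ``nef $=$ globally generated'' to get the weak Fano boundary) is reasonable, but two of your load-bearing claims are wrong or missing, and the paper itself does not argue this way: it deduces the statement directly as a particular case of the combinatorial ampleness/global-generation criterion of \cite{Pasquier2006}, applied to $-K_{\mathbb{X}}$ written in the basis of $B$-stable prime divisors. The first concrete error is your decomposition $-K_{\mathbb{X}}=X_++X_-+\sum_\gamma a_\gamma D_\gamma$ with \emph{two $G$-stable prime divisors}. Outside the imaginary cases, these varieties have \emph{no} $G$-stable prime divisors at all: the closed orbits $Y\simeq G/P_{\alpha_0,\beta}$ and $Z\simeq G/P_{\alpha_1,\beta}$ have codimension $\geq 2$ (both rays of the colored fan carry colors; this is forced by $\operatorname{Pic}(\mathbb{X})\simeq\Zbb^2$, since there are three colors $D_{\alpha_0},D_{\alpha_1},D_\beta$ and one relation from the rank-one weight lattice). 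So $-K_{\mathbb{X}}=a_{\alpha_0}D_{\alpha_0}+a_{\alpha_1}D_{\alpha_1}+a_\beta D_\beta$, and your computation on the first ray collapses: the closure of a generic $\Cbb^*$-orbit cannot be ``disjoint from every color'' and meet two $G$-stable divisors once each, otherwise $-K_{\mathbb{X}}$ would have degree $0$ on it and no member of the family would be Fano. Your picture is only correct in the doubly-imaginary (Hirzebruch-like) family, which is also the only case your consistency check probes.

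The second problem is that the actual content of the proposition, namely $-K_{\mathbb{X}}\cdot C=a_\beta-a_1a_{\alpha_1}$ together with the identification of $C$ and the fact that your two classes span the Mori cone across all (more than 750) families, is exactly what you leave unproved (``should simplify'', ``granting this''). By contrast, in the criterion the paper invokes, the inequality falls out with no curve hunting: identifying $M\simeq\Zbb$ via $\chi_Y=-\varpi_{\alpha_0}+\varpi_{\alpha_1}+a_1\varpi_\beta$, the colors map to $\sigma(D_{\alpha_0})=-1$, $\sigma(D_{\alpha_1})=+1$, $\sigma(D_\beta)=a_1\geq 0$; the piecewise linear function of $-K_{\mathbb{X}}$ has slopes $m_-=-a_{\alpha_0}$ and $m_+=a_{\alpha_1}$ determined by the colors attached to the two rays, its (strict) convexity is automatic since $a_{\alpha_0},a_{\alpha_1}>0$, and the unique non-automatic condition is the one at the non-attached color $D_\beta$: $a_\beta> m_+\cdot a_1=a_1a_{\alpha_1}$ for ample, $\geq$ for globally generated, i.e.\ nef; bigness of $-K_{\mathbb{X}}$ (noted in the paper, from \cite{brioncurves}, \cite{briondiv}) then converts nef into weak Fano. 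This also answers cleanly the asymmetry you flagged: $a_{\alpha_1}$ multiplies $a_1$ because $\sigma(D_\beta)=a_1\geq 0$ lies on the same ray as $D_{\alpha_1}$, not for any reason involving a curve inside $Z$. If you want to keep a Mori-theoretic dual formulation, you would first need the correct divisor decomposition and a uniform description of the two extremal curve classes, which is precisely the work your proposal defers.
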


As a direct consequence, we obtain that when $a_1=0$, all $\mathbb{X}$ are Fano varieties.

We can also check that, in all cases with not trivial $H^1(\mathbb{X},T_\mathbb{X})$ where $a_1=1$, except case (XVIII) (8) (with $G_0$ not of type $A_1$), we get $a_\beta=a_{\alpha_1}$ and then all these $\mathbb{X}$ are weak Fano. In fact, we notice that the computation is done with $\delta$'s in the same positive root system of type $A_m$ (or empty), in both cases. For case (XVIII) (8) with $G_0$ not of type $A_1$, we have $a_\beta>a_{\alpha_1}=2$, so that for $a_1=1$, $\mathbb{X}$ is Fano.

In all other cases different from case (XVIII) (8) with $G_0$ not of type $A_1$ or cases where $\alpha_1$ is imaginary and $a_1=2$, $\mathbb{X}$ is not weak Fano. \\

To deal with the remaining cases, we find other arguments to prove that $H^2(\mathbb{X},T_\mathbb{X})$ is trivial, which implies that there is no obstruction to deformations.

 First, we notice that if $\mathbb{X}$ is a two-orbit variety, and $H^1(\mathbb{X},T_\mathbb{X})$ is not trivial, then $H^2(\mathbb{X},T_\mathbb{X})$ has to be trivial. Indeed, if $\mathbb{X}$ is a  two-orbit variety then the normal bundle over one of the two closed $G$-orbit has not trivial $H^0$, and if moreover $H^1(\mathbb{X},T_\mathbb{X})$ is not trivial then the normal bundle over the other  closed $G$-orbit is non trivial $H^0$. But, with Borel-Weil-Bott Theorem, these normal bundles have at most one not trivial $H^i$, so that $H^2(\mathbb{X},T_\mathbb{X})$, which is the direct sum of the $H^2$'s of the two normal bundle, is trivial.\\

There are now few cases left (not weak-Fano, three-orbit varieties): (XVII) (1) with $a_1=2$, (XVIII) (3), (4), (5), (7) and (8) for any  $a_1\geq 2$, for which $H^1(Y,N_{Y/\mathbb{X}})\neq 0$; and case (XVIII) (8) with any $G_0$, for which $H^1(Z,N_{Z/\mathbb{X}})\neq 0$. For the first cases, we conclude quite easily by looking at the normal bundle $N_{Z/\mathbb{X}}$.

In Case (XVII) (1) with $a_1=2$, $w_0^{\alpha_1,\beta}(\chi_Z)-\rho$ has coefficient $\lambda=d_{\alpha_0}-a_1-1=0$ in $\varpi_\beta$ (because $d_{\alpha_0}=3$) so we have trivial $H^i(Y,N_{Y/\mathbb{X}})$'s.

In Cases (XVIII) (3), (4), (5), (7) and (8) with $G_0$ of type $A_1$, $w_0^{\alpha_1,\beta}(\chi_Z)-\rho$ has coefficient $c_{\alpha_0}-2=0$ in $\varpi_{\alpha_1}$ so we have trivial $H^i(Y,N_{Y/\mathbb{X}})$'s.\\

The rest of the section is devoted to the case (XVIII) (8) with any $G_0$ for any $a_1\geq 2$, for which we have to study $H^2(Y,N_{Y/\mathbb{X}})$. 

We already know that if $G_0$ is of type $A_1$ then it vanishes because $H^1(Y,N_{Y/\mathbb{X}})$ is not trivial. Recall that $$w_0^{\alpha_0,\beta}(\chi_Y)-\rho=-\varpi_{\alpha_0}-2\varpi_{\alpha_1}+(a_1-1)\varpi_\beta-\sum\limits_{\substack{\chi_l\in S \\ \chi_l\neq\beta \\ \chi_l\neq\alpha_0 \\ \chi_l\neq\alpha_1}}\varpi_{\chi_l}$$ and that $G$ is the product of $G_0$ and a group of type $G_2$ whose simple roots are $\alpha_0$ and $\alpha_1$. The $G$-module $H^2(Y,N_{Y/\mathbb{X}})$ is not trivial if and only if there exists two distinct simple reflections $s$ and $s'$ such that $ss'(w_0^{\alpha_0,\beta}(\chi_Y)-\rho)$ is strictly antidominant. It is easy to see that both simple reflections have to be associated to simple roots of $G_0$. We distinguish several situations, by adopting a similar strategy as in Remark~\ref{rem:uniquecoeffpositif}, ie by restricting the cases that can get such $s$ and $s'$. 

If $s=s_{\chi_l}$ and $s'=s_{\chi_{l'}}$ are both different from $s_\beta$, then $s's(\chi_l^\vee)$ is a negative coroot with zero coefficient in $\beta^\vee$ and then $ss'(w_0^{\alpha_0,\beta}(\chi_Y)-\rho)$ is not antidominant. Suppose now that $\beta$ is either $\chi_l$ or $\chi_{l'}$.
Note also that, if $\chi_l$ and $\chi_{l'}$ are orthogonal, then $s's(\chi_l^\vee)=-\chi_l^\vee$ and $s's(\chi_{l'}^\vee)=-\chi_{l'}^\vee$ so that $ss'(w_0^{\alpha_0,\beta}(\chi_Y)-\rho)$ is not antidominant. Then we can also suppose that $\chi_l$ and $\chi_{l'}$ are linked in the Dynking diagram. 
Now, if $\beta=\chi_l$, since $s_\beta(\chi_l^\vee)$ is a positive coroot and $s's(s_\beta(\chi_l^\vee))=-\chi_l^\vee$, $ss'(w_0^{\alpha_0,\beta}(\chi_Y)-\rho)$ cannot be antidominant. 

To summarize from now, we proved that, $H^2(Y,N_{Y/\mathbb{X}})$ is not trivial if and only if there exists a simple root $\chi_l$ linked with $\beta$ such that $s_{\chi_l}s_\beta(w_0^{\alpha_0,\beta}(\chi_Y)-\rho)$ is strictly antidominant.\\

We compute that $s_\beta s_{\chi_l}(\beta^\vee)=(\langle \chi_l,\beta^\vee\rangle\langle \beta,\chi_l^\vee\rangle-1)\beta^\vee-\langle\chi_l,\beta^\vee\rangle\chi_l^\vee$, so that the coefficient of $s_{\chi_l}s_\beta(w_0^{\alpha_0,\beta}(\chi_Y)-\rho)$ in $\varpi_\beta$ is $(a_1-1)(\langle \chi_l,\beta^\vee\rangle\langle \beta,\chi_l^\vee\rangle-1)+\langle \chi_l,\beta^\vee\rangle$, which is negative (or equivalently $\leq -1$) if and only if either $\langle \chi_l,\beta^\vee\rangle\langle \beta,\chi_l^\vee\rangle=1$ (ie $\beta$ and $\chi_l$ linked by a simple arrow) or $$a_1\leq 1+\frac{-\langle \chi_l,\beta^\vee\rangle-1}{\langle \chi_l,\beta^\vee\rangle\langle \beta,\chi_l^\vee\rangle-1}=\left\lbrace\begin{array}{ccc}
   1  & \mbox{if} & -\langle \chi_l,\beta^\vee\rangle=1 \\
   2  & \mbox{if} & -\langle \chi_l,\beta^\vee\rangle=2\mbox{ or }3
\end{array} \right.$$
Moreover, $s_\beta s_{\chi_l}(\chi_l^\vee)=-\chi_l^\vee+\langle\beta,\chi_l^\vee\rangle\beta^\vee$. In particular, if $a_1=2$ and $-\langle\beta,\chi_l^\vee\rangle=1$ (which occurs when $-\langle \chi_l,\beta^\vee\rangle\geq 2$ or $\langle \chi_l,\beta^\vee\rangle\langle \beta,\chi_l^\vee\rangle=1$), the coefficient of $s_{\chi_l}s_\beta(w_0^{\alpha_0,\beta}(\chi_Y)-\rho)$ in $\varpi_{\chi_l}$ is 0. We conclude that the only case left is when $\langle \chi_l,\beta^\vee\rangle\langle \beta,\chi_l^\vee\rangle=1$ and $a_1\geq 3$. We then focus on that left case.

If there exists a second simple root $\delta$ linked to $\beta$, then $s_\beta s_{\chi_l}(\delta^\vee)=s_\beta(\delta^\vee)=\beta^\vee+\delta^\vee$ and the coefficient  of $s_{\chi_l}s_\beta(w_0^{\alpha_0,\beta}(\chi_Y)-\rho)$ in $\varpi_{\delta}$ is $a_1-2>0$. Similarly, if there exists a second simple root $\delta$ linked to $\chi_l$, then $s_\beta s_{\chi_l}(\delta^\vee)=s_\beta(\delta^\vee+\chi_l^\vee)=\beta^\vee+\delta^\vee+\chi_l^\vee$ and the coefficient  of $s_{\chi_l}s_\beta(w_0^{\alpha_0,\beta}(\chi_Y)-\rho)$ in $\varpi_{\delta}$ is $a_1-3\geq 0$. We conclude that, in case (XVIII) (8) with any $G_0$ and $a_1\geq 2$, $H^2(Y,N_{Y/\mathbb{X}})$ is not trivial if and only if $G_0$ is of type $A_3$ and $a_1\geq 3$. (And in that case, $H^2(Y,N_{Y/\mathbb{X}})$ is the $\operatorname{SL}_3\times G_2$-module $V((a_1-3)\varpi_1+\varpi_{\alpha_1})$, where $\varpi_1$ is the first fundamental weight of $\operatorname{SL}_3$.) This concludes the proof of Theorem~\ref{th:obstruction}.\\

Note that, in case (XVIII) (8) with $G_0$ of type $A_3$ and $a_3\geq 3$, we do not know if local deformations are obstructed or not.



\section{Some explicit deformations in few cases}\label{sec4}
Here, we give two cases where we can explicitly describe a family of deformations, by using the horospherical description of the varieties. Unfortunately, the construction does not generalize to all cases.

\subsection{$G = \operatorname{SL}_2\times \{1\} \times \Cbb^*$ and $a_1\geq 2$}\label{Hirzebruch}

In that situation, the varieties $\mathbb{X}$ are well-known varieties : indeed they are toric (under the action of $T\times  \{1\} \times \Cbb^*\subseteq \operatorname{SL}_2\times \{1\} \times \Cbb^*$), and isomorphic to the Hirzebruch surfaces $\mathbb{F}_{a_1}$. The deformations of  Hirzebruch surfaces are known and explicitly described in  \cite{Manetti04} : $\mathbb{F}_q$ deforms into $\mathbb{F}_{q-2k}$ for any $k$ such that $q-2k\geq 0$. Note that $\mathbb{F}_0=\Pbb^1\times\Pbb^1$ and $\mathbb{F}_1=\operatorname{Bl}_0\Pbb^2$ are locally rigid.\\

We can construct an explicit family of deformations of $\mathbb{F}_2$ to $\Pbb^1\times\Pbb^1$ as follows.

As a horospherical $\operatorname{SL}_2$-variety, $\mathbb{X}=\mathbb{F}_2$ is defined as the closure of the $\operatorname{SL}_2$-orbit of $[e_1+e_1\otimes e_1\otimes e_1]$ in $\Pbb(\Cbb^2\oplus S^3\Cbb^2)$, where $(e_1,e_2)$ is a basis of $\Cbb^2$. But the $\operatorname{SL}_2$-module $V(\varpi_1)\oplus V(3\varpi_1)=\Cbb^2\oplus S^3\Cbb^2$ is isomorphic to $\Cbb^2\otimes S^2\Cbb^2$. Indeed, in $\Cbb^2\otimes S^2\Cbb^2$,  a highest weight of weight  $3\varpi_1$ is clearly $e_1\otimes (e_1\otimes e_1)$, and a highest weight of weight  $\varpi_1$ is  $e_2\otimes (e_1\otimes e_1)-\frac{1}{2} e_1\otimes (e_1\otimes e_2+e_2\otimes e_1)$.

For any $\lambda\in\Cbb$, we consider $x_\lambda:=e_1+e_1\otimes e_1\otimes e_1+\lambda (e_2\otimes e_1\otimes e_1 + e_1\otimes e_2\otimes e_1 +e_1\otimes e_1\otimes e_2$ in $\Cbb^2\oplus S^3\Cbb^2$; and $\mathfrak{X}_\lambda$ the closure of the $\operatorname{SL}_2$-orbit of $[x_\lambda]$ in $\Pbb(\Cbb^2\oplus S^3\Cbb^2)$. 

For $\lambda=0$, it is clear that $\mathfrak{X}_0$ is the variety $\mathbb{X}$.
And for any $\lambda\neq 0$, by identifying $e_1$ with $2\lambda(e_2\otimes (e_1\otimes e_1)-\frac{1}{2} e_1\otimes (e_1\otimes e_2+e_2\otimes e_1))$ in the isomorphism $\Cbb^2\oplus S^3\Cbb^2\simeq\Cbb^2\otimes S^2\Cbb^2$, the vector $x_\lambda$ identifies to $(e_1+3\lambda e_2)\otimes (e_1\otimes e_1)$, and $\mathfrak{X}_{\lambda}$ is then isomorphic to the closure of the $\operatorname{SL}_2$-orbit of $([e_1+3\lambda e_2], [e_1])$ in $\Pbb^1\times\Pbb^1$ (embedded in $\Pbb^5$, with $\mathcal{O}(1)\times\mathcal{O}(2)$-polarization), which is $\Pbb^1\times\Pbb^1$.\\

In fact, this construction corresponds to the horospherical degeneration (\cite[section 2.2]{AlexeevBrion05} or \cite{Knop90}) of the spherical (but not horospherical) $\operatorname{SL}_2$-variety $\Pbb^1\times\Pbb^1$ (with diagonal action). We cannot expect the same with $a_1=3$ because $\operatorname{Bl}_0\Pbb^2$ is already a horospherical $\operatorname{SL}_2$-variety.

	\subsection{Case (I)(3) with $a_1=1$}
	
	The construction above also works in that case,  corresponding to the horospherical degeneration of the product $\mathbb{P}^m\times\mathbb{G}(i;m+1)$ (of a projective space and a grassmannian), under $\operatorname{SL}_{m+1}$-action.\\
	
	Let $G=\SL_{m+1}$, ie of type $A_m$ with $m\geq 3$. Let $2\leq i\leq m-1$ and set $\beta=\chi_1$, $\alpha_0=\chi_{i+1}$ and $\alpha_1=\chi_i$.
	Then $\mathbb{X}$ is the closure of $G\cdot [v_{\varpi_1+\varpi_{i+1}}+v_{2\varpi_1+\varpi_i}]$ in $\mathbb{P}(V(\varpi_1+\varpi_{i+1})\oplus V(2\varpi_1+\varpi_i))$.

	\begin{lem}\label{lem:submoduleI3}
	We have $V(\varpi_1+\varpi_{i+1})\oplus V(2\varpi_1+\varpi_i)\subseteq V(2\varpi_1)\otimes V(\varpi_i)$.
	\end{lem}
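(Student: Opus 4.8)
The plan is to identify both modules concretely and then reduce the statement to the Pieri rule. Write $V=\Cbb^{m+1}$ for the standard representation of $G=\SL_{m+1}$. Then $V(2\varpi_1)=S^2V$ and $V(\varpi_i)=\Lambda^iV$, so that $V(2\varpi_1)\otimes V(\varpi_i)=S^2V\otimes\Lambda^iV$, and it suffices to decompose this tensor product into irreducibles and read off its highest weights. Since the lemma only asks for an inclusion, I will in fact prove the full decomposition, which is stronger.

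First I would apply the dual Pieri rule: tensoring a Schur functor $S^{\lambda}V$ with $\Lambda^iV=S^{(1^i)}V$ yields $\bigoplus_{\mu}S^{\mu}V$, the sum running over all partitions $\mu$ with $\mu\supseteq\lambda$, $|\mu|=|\lambda|+i$, and $\mu/\lambda$ a vertical strip (at most one added box per row). Taking $\lambda=(2)$, the constraint forces $\mu_1\in\{2,3\}$ and every later part in $\{0,1\}$, so the only two possibilities with $|\mu|=i+2$ are $(3,1^{i-1})$ and $(2,1^i)$, giving
$$S^2V\otimes\Lambda^iV=S^{(3,1^{i-1})}V\oplus S^{(2,1^i)}V.$$
Next I would translate these partitions into highest weights via $\mu\mapsto\sum_{k=1}^{m}(\mu_k-\mu_{k+1})\varpi_k$. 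For $\mu=(3,1^{i-1})$ the only nonzero differences are $\mu_1-\mu_2=2$ and $\mu_i-\mu_{i+1}=1$, giving $2\varpi_1+\varpi_i$; for $\mu=(2,1^i)$ they are $\mu_1-\mu_2=1$ and $\mu_{i+1}-\mu_{i+2}=1$, giving $\varpi_1+\varpi_{i+1}$. The hypotheses $2\le i\le m-1$ ensure that both partitions have at most $m+1$ rows and that $\varpi_i,\varpi_{i+1}$ are genuine fundamental weights of $A_m$, so this is legitimate, and we conclude $V(2\varpi_1)\otimes V(\varpi_i)=V(2\varpi_1+\varpi_i)\oplus V(\varpi_1+\varpi_{i+1})$, whence the claimed inclusion.

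The computation is entirely routine; the only point needing a little care—and the main (minor) obstacle—is verifying that the Pieri enumeration is complete, i.e. that no vertical strip of size $i$ on $\lambda=(2)$ other than the two listed exists. As a sanity check one can compare dimensions in a small case ($m=3$, $i=2$ gives $45+15=60=\dim(S^2\Cbb^4\otimes\Lambda^2\Cbb^4)$). Alternatively, since the paper favours explicit highest weight vectors, one could mirror the previous subsection and exhibit them directly: $(e_1e_1)\otimes(e_1\wedge\cdots\wedge e_i)$ is visibly a highest weight vector of weight $2\varpi_1+\varpi_i$, while a suitable linear combination of the weight vectors $(e_1e_j)\otimes(e_1\wedge\cdots\wedge\widehat{e_j}\wedge\cdots\wedge e_{i+1})$ produces one of weight $\varpi_1+\varpi_{i+1}$; checking that this combination is annihilated by the simple raising operators is the only genuine work along that route.
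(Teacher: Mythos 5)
Your proof is correct, but it proceeds by a genuinely different route than the paper. You invoke the (dual) Pieri rule to decompose $S^2V\otimes\Lambda^iV$ completely, obtaining $S^{(3,1^{i-1})}V\oplus S^{(2,1^i)}V$, which under the standard dictionary $\mu\mapsto\sum_k(\mu_k-\mu_{k+1})\varpi_k$ is exactly $V(2\varpi_1+\varpi_i)\oplus V(\varpi_1+\varpi_{i+1})$; your enumeration of vertical strips on $\lambda=(2)$ is complete (with $\mu_1\in\{2,3\}$ and $\mu_k\le 1$ for $k\ge 2$, only $(3,1^{i-1})$ and $(2,1^i)$ have size $i+2$), the row-count and range conditions from $2\le i\le m-1$ are properly checked, and your dimension sanity check $45+15=60$ for $m=3$, $i=2$ is accurate. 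The paper instead does what you sketch as your alternative: it shows $v_{2\varpi_1}\otimes v_{\varpi_i}$ is a highest weight vector of weight $2\varpi_1+\varpi_i$ and exhibits the explicit vector $v_0=\sum_{j=1}^{i+1}(-1)^{j-1}(e_1\otimes e_j+e_j\otimes e_1)\otimes e_1\wedge\cdots\wedge e_{j-1}\wedge e_{j+1}\wedge\cdots\wedge e_{i+1}$ of weight $\varpi_1+\varpi_{i+1}$, verifying $B$-invariance directly against the generators $b_{hk}(\lambda)$. Your route is shorter and strictly stronger (it yields the full decomposition, not merely the inclusion), but the paper's choice is not incidental: the explicit vector $v_0$ is used downstream in Section~\ref{sec4}, where the identification of $v_{\varpi_1+\varpi_{i+1}}$ with $\lambda v_0$ and the resulting vector $z_\lambda$ are the engine of the explicit deformation of $\mathbb{X}$ into $\mathbb{P}^m\times\mathbb{G}(i;m+1)$; a purely combinatorial Pieri argument would have to be supplemented by such a vector anyway for that construction.
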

	
	\begin{proof}
	It is enough to find highest weight vectors (ie $B$-stable vectors) of weights $2\varpi_1+\varpi_i$ and $\varpi_1+\varpi_{i+1}$ in $V(2\varpi_1)\otimes V(\varpi_i)$. It is obvious that $v_{2\varpi_1}\otimes v_{\varpi_i}$ is a highest weight vector of weight $2\varpi_1+\varpi_i$.
	
	For the second weight, consider the vector $$v_0:=\sum_{j=1}^{i+1} (-1)^{j-1} (e_1\otimes e_j+e_j\otimes e_1)\otimes e_1\wedge\cdots\wedge e_{j-1}\wedge e_{j+1}\wedge\cdots\wedge e_{i+1}.$$
	
	We easily check that $T$ acts with weight $2\epsilon_1+\epsilon_2+\cdots+\epsilon_{i+1}=\varpi_1+\varpi_{i+1}$ on $v_0$. Let $1\leq h<k\leq m+1$. For any $\lambda\in \Cbb$, let $b_{hk}(\lambda)$ be the element of $G$ that sends $e_k$ to $e_k+\lambda e_h$ and fixes all others $e_j$'s.
	
	For any $1\leq j\leq i+1$, denote by $u_j$ the vector $(e_1\otimes e_j+e_j\otimes e_1)\otimes e_1\wedge\cdots\wedge e_{j-1}\wedge e_{j+1}\wedge\cdots\wedge e_{i+1}$. Then $$b_{hk}(\lambda)\cdot u_j= \left\lbrace\begin{array}{ccc}
	u_j & if & j\neq h,\,k\\
	u_j+\lambda(e_1\otimes e_h+e_h\otimes e_1)\otimes e_1\wedge\cdots\wedge e_{k-1}\wedge e_{k+1}\wedge\cdots\wedge e_{i+1} & if & j= k\\
	u_j-(-1)^{k-h}\lambda(e_1\otimes e_h+e_h\otimes e_1)\otimes e_1\wedge\cdots\wedge e_{k-1}\wedge e_{k+1}\wedge\cdots\wedge e_{i+1} & if & j= h
\end{array}\right.	 $$

Then, we can check that $b_{hk}(\lambda)\cdot v_0=v_0$. And, since $T$ and the $b_{hk}(\lambda)$ generate $B$, we have proved that $v_0$ is a highest weight vector of weight $\varpi_1+\varpi_{i+1}$.
	\end{proof}
	
	A consequence of the lemma is that, for any $\lambda\in\Cbb^*$, $\mathbb{X}$ can be seen as the closure of $G\cdot [\lambda v_0+v_{2\varpi_1}\otimes v_{\varpi_i}]$ in $\mathbb{P}(V(2\varpi_1)\otimes V(\varpi_i))$.
	
	Set $v_1:=\frac{u_1}{2}+(-1)^{i-1}u_{i+1}$ and, for any $2\leq j\leq i$, $v_j:=u_j+(-1)^{i-j}u_{i+1}$.
	
	\begin{lem}
	For any $1\leq j\leq i$, $v_j$ is in $V(2\varpi_1+\varpi_i)$.
	\end{lem}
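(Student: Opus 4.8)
The plan is to exhibit $V(2\varpi_1+\varpi_i)$ as the kernel of one explicit $G$-equivariant map out of $V(2\varpi_1)\otimes V(\varpi_i)=S^2\Cbb^{m+1}\otimes\wedge^i\Cbb^{m+1}$, and then merely to check that every $v_j$ lies in that kernel. By Lemma~\ref{lem:submoduleI3}, together with the Pieri rule (which shows there are no further components), the ambient module splits as
$$S^2\Cbb^{m+1}\otimes\wedge^i\Cbb^{m+1}=V(2\varpi_1+\varpi_i)\oplus V(\varpi_1+\varpi_{i+1}),$$
a multiplicity-free sum of two non-isomorphic irreducibles. So it suffices to pin down which summand each $v_j$ belongs to.

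I would introduce the $G$-equivariant ``partial wedge'' map
$$\phi\colon S^2\Cbb^{m+1}\otimes\wedge^i\Cbb^{m+1}\longrightarrow \Cbb^{m+1}\otimes\wedge^{i+1}\Cbb^{m+1},\qquad \phi\big((a\cdot b)\otimes\omega\big)=a\otimes(b\wedge\omega)+b\otimes(a\wedge\omega),$$
which is manifestly well defined on the symmetric square. By Pieri the target is $V(\varpi_1+\varpi_{i+1})\oplus V(\varpi_{i+2})$, hence contains no copy of $V(2\varpi_1+\varpi_i)$. Now $\phi$ kills the highest weight vector $v_{2\varpi_1}\otimes v_{\varpi_i}=e_1^2\otimes(e_1\wedge\cdots\wedge e_i)$ because $e_1\wedge e_1=0$; since that vector generates the irreducible $V(2\varpi_1+\varpi_i)$ and $\phi$ is equivariant, $\phi$ must vanish on this entire summand. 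On the other hand $\phi$ is nonzero on $V(\varpi_1+\varpi_{i+1})$, hence injective there by Schur. Therefore $\ker\phi=V(2\varpi_1+\varpi_i)$ exactly, and the lemma reduces to the single assertion $\phi(v_j)=0$ for $1\leq j\leq i$.

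The evaluation of $\phi$ on the $v_j$'s is then routine. All the $u_k$ share the weight $2\epsilon_1+\epsilon_2+\cdots+\epsilon_{i+1}$, and $\phi$ sends each of them into the single line spanned by $\Omega:=e_1\otimes(e_1\wedge\cdots\wedge e_{i+1})$; a short computation gives $\phi(u_1)=2\Omega$ and $\phi(u_k)=(-1)^{k-1}\Omega$ for $k\geq2$. Substituting into $v_1=\tfrac12 u_1+(-1)^{i-1}u_{i+1}$ and into $v_j=u_j+(-1)^{i-j}u_{i+1}$ for $2\leq j\leq i$, the two contributions cancel in each case, so $\phi(v_j)=0$ and thus $v_j\in\ker\phi=V(2\varpi_1+\varpi_i)$. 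As a sanity check one also computes $\phi(v_0)=(i+2)\,\Omega\neq0$ (using $i\geq2$), confirming that $v_0$ sits in the other summand $V(\varpi_1+\varpi_{i+1})$ and that we have identified $\ker\phi$ correctly.

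The conceptual part — the map $\phi$ and the identification $\ker\phi=V(2\varpi_1+\varpi_i)$ — is immediate. The main place where care is needed, and the only place an error could creep in, is the sign bookkeeping: the factor $(-1)^{k-1}$ in $\phi(u_k)$ comes from moving $e_k$ past $e_1,\dots,e_{k-1}$ in $e_k\wedge\omega_k$, and one must verify that this sign matches the prefactor $(-1)^{i-j}$ built into the definition of $v_j$ so that the two terms cancel. That elementary verification is the step I would write out most carefully.
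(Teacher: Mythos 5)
Your proof is correct, and it takes a genuinely different route from the paper's. The paper argues ``from below'': it generates $V(2\varpi_1+\varpi_i)$ from its highest weight vector $e_1\otimes e_1\otimes e_1\wedge\cdots\wedge e_i$ by applying explicit unipotent elements (mapping $e_1\mapsto e_1\pm e_{i+1}$, then $e_1\mapsto e_1\pm e_2$ and $e_2\mapsto e_2\pm e_{i+1}$) and taking differences of the resulting vectors to isolate first $v_1$, then $v_2$, and finally the remaining $v_j$ by permuting basis vectors; this is a case-by-case, sign-heavy manipulation. You instead characterize the summand ``from above'': using the Pieri rule to see that $S^2\Cbb^{m+1}\otimes\wedge^i\Cbb^{m+1}$ is the multiplicity-free sum $V(2\varpi_1+\varpi_i)\oplus V(\varpi_1+\varpi_{i+1})$, you realize $V(2\varpi_1+\varpi_i)$ as the kernel of the equivariant contraction $\phi$, which kills the highest weight vector $e_1^2\otimes(e_1\wedge\cdots\wedge e_i)$ and is nonzero, hence injective by Schur, on the other summand; your check $\phi(v_0)=(i+2)\Omega\neq 0$ pins this down directly, since $v_0$ is precisely the highest weight vector of $V(\varpi_1+\varpi_{i+1})$ exhibited in Lemma~\ref{lem:submoduleI3}. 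I verified the sign bookkeeping: $\phi(u_1)=2\Omega$, $\phi(u_k)=(-1)^{k-1}\Omega$ for $k\geq 2$, and the cancellations $\phi(v_1)=\Omega+(-1)^{i-1}(-1)^{i}\Omega=0$ and $\phi(v_j)=\bigl((-1)^{j-1}+(-1)^{j}\bigr)\Omega=0$ all hold. Your approach buys a single uniform verification for all $v_j$ (no ad hoc group elements, no induction on $j$ via permutations), plus, for free, the exact identification of the complement; the paper's approach is more elementary in that it needs no plethysm input beyond the highest weight vectors already produced in Lemma~\ref{lem:submoduleI3}. Two cosmetic remarks: the nonvanishing of $i+2$ needs no hypothesis on $i$; and in the boundary case $i=m-1$ the second summand of your target is $\wedge^{m+1}\Cbb^{m+1}$, the trivial $\operatorname{SL}_{m+1}$-module rather than a fundamental representation --- this does not affect the argument, since either way the target contains no copy of $V(2\varpi_1+\varpi_i)$.
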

	
	\begin{proof}
	Recall that $V(2\varpi_1+\varpi_i)$ is the $G$-module generated by $v_{2\varpi_1+\varpi_i}:=e_1\otimes e_1\otimes e_1\wedge\cdots\wedge e_i$. 
	
	Let us prove that $v_1$ is in $V(2\varpi_1+\varpi_i)$. By mapping $e_1$ to $e_1+e_{i+1}$ we transform $v_{2\varpi_1+\varpi_i}$ into $$\begin{array}{cccc}
	& e_1\otimes e_1\otimes e_1\wedge\cdots\wedge e_i&+& (-1)^{i-1}v_1 \\
	 +& (-1)^{i-1}(e_1\otimes e_{i+1}+e_{i+1}\otimes e_1\otimes e_2\wedge\cdots\wedge e_{i+1})& + & e_{i+1}\otimes e_{i+1}\otimes e_1\wedge\cdots\wedge e_i\\
	 +&  (-1)^{i-1}e_{i+1}\otimes e_{i+1}\otimes e_2\wedge\cdots\wedge e_{i+1}. & &\end{array}$$ And by mapping $e_1$ to $e_1-e_{i+1}$ we get $$\begin{array}{cccc}
	& e_1\otimes e_1\otimes e_1\wedge\cdots\wedge e_i&-& (-1)^{i-1}v_1 \\
	 +& (-1)^{i-1}(e_1\otimes e_{i+1}+e_{i+1}\otimes e_1\otimes e_2\wedge\cdots\wedge e_{i+1})& + & e_{i+1}\otimes e_{i+1}\otimes e_1\wedge\cdots\wedge e_i\\
	 -&  (-1)^{i-1}e_{i+1}\otimes e_{i+1}\otimes e_2\wedge\cdots\wedge e_{i+1}. & &\end{array}$$ Then the difference $2(-1)^{i-1}(v_1+e_{i+1}\otimes e_{i+1}\otimes e_2\wedge\cdots\wedge e_{i+1})$ of both vectors is in $V(2\varpi_1+\varpi_i)$. But $e_{i+1}\otimes e_{i+1}\otimes e_2\wedge\cdots\wedge e_{i+1}$ can be obtained from $e_1\otimes e_1\otimes e_1\wedge\cdots\wedge e_i$ by exchanging $e_1$ and $e_{i+1}$, so that it is in $V(2\varpi_1+\varpi_i)$ and we conclude.\\
	
	Note that, by exchanging $e_2$ with $e_j$ for $3\leq j\leq i$, we get that $v_2$ is in $V(2\varpi_1+\varpi_i)$ if and only if $v_j$ is. Then, it remains to prove that $v_2\in V(2\varpi_1+\varpi_i)$. First by mapping $e_1$ to $e_1\pm e_2$ we get $e_1\otimes e_1\otimes e_1\wedge\cdots\wedge e_i
	\pm (e_1\otimes e_2+e_2\otimes e_1)\otimes e_1\wedge\cdots\wedge e_i 
	+e_2\otimes e_2\otimes e_1\wedge\cdots\wedge e_i$.  We then deduce (as above) that $(e_1\otimes e_2+e_2\otimes e_1)\otimes e_1\wedge\cdots\wedge e_i$ is in $V(2\varpi_1+\varpi_i)$. Now, from the latter vector, by mapping  $e_2$ to $e_2\pm e_{i+1}$, we obtain $(e_1\otimes e_2+e_2\otimes e_1)\otimes e_1\wedge\cdots\wedge e_i\pm (-1)^{i-1}v_2 + (-1)^{i-2}(e_1\otimes e_{i+1}+e_{i+1}\otimes e_1)\otimes e_1\wedge e_3\cdots\wedge e_{i+1}$. This implies that $v_2$ is also in $V(2\varpi_1+\varpi_i)$
	\end{proof}

Notice that $$\begin{array}{ccc}v_0=\sum_{j=1}^{i+1}(-1)^{j-1}u_j& = &	u_1+\sum_{j=2}^{i}((-1)^{j-1}v_j-(-1)^{i-1}u_{i+1})+(-1)^iu_{i+1}\\
& = & u_1+\sum_{j=2}^{i}(-1)^{j-1}v_j +i(-1)^{i}u_{i+1}\\ 
& = & (1+\frac{i}{2})u_1-i v_1 +\sum_{j=2}^{i}(-1)^{j-1}v_j. 
\end{array} $$
In particular, $v_0+i v_1 -\sum_{j=2}^{i}(-1)^{j-1}v_j=(1+\frac{i}{2})u_1$.
	
	For any $\lambda\in\Cbb^*$, set $$z_\lambda:=\lambda v_0+ v_{2\varpi_1+\varpi_i}-\frac{\lambda i}{2}v_1-\lambda\sum_{j=2}^i (-1)^{j-1}v_j=e_1\otimes e_1\otimes (e_1+\lambda(i+2)e_{i+1})\wedge e_2\wedge\cdots\wedge e_i.$$
	
	\begin{lem}\label{lem:generalfiberI3} For any $\lambda\in\Cbb^*$, the closure $X_\lambda$ of $G\cdot [z_\lambda]$ in $\mathbb{P}(V(2\varpi_1)\otimes V(\varpi_i))$ is isomorphic to the product $\mathbb{P}^m\times\mathbb{G}(i;m+1)$ (of a projective space and a grassmannian). 
	\end{lem}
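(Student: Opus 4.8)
The plan is to exploit the fact that $z_\lambda$ is a \emph{decomposable} tensor in $V(2\varpi_1)\otimes V(\varpi_i)$, where $V(2\varpi_1)=S^2\Cbb^{m+1}$ and $V(\varpi_i)=\wedge^i\Cbb^{m+1}$. Indeed the expression computed above reads
\[
z_\lambda=(e_1\otimes e_1)\otimes\bigl((e_1+\lambda(i+2)e_{i+1})\wedge e_2\wedge\cdots\wedge e_i\bigr)=w\otimes\eta_\lambda,
\]
with $w=e_1\otimes e_1\in S^2\Cbb^{m+1}$ and $\eta_\lambda=(e_1+\lambda(i+2)e_{i+1})\wedge e_2\wedge\cdots\wedge e_i\in\wedge^i\Cbb^{m+1}$. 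Since $g\cdot(w\otimes\eta_\lambda)=(g\cdot w)\otimes(g\cdot\eta_\lambda)$, the orbit $G\cdot[z_\lambda]$ is the image, under the $G$-equivariant Segre embedding $\mathbb{P}(V(2\varpi_1))\times\mathbb{P}(V(\varpi_i))\hookrightarrow\mathbb{P}(V(2\varpi_1)\otimes V(\varpi_i))$, of the diagonal orbit $G\cdot([w],[\eta_\lambda])$. As the Segre map is a closed immersion, this first step reduces the claim to identifying the closure of $G\cdot([w],[\eta_\lambda])$ inside $\mathbb{P}(V(2\varpi_1))\times\mathbb{P}(V(\varpi_i))$.

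Next I would identify the two factors. The vector $w=e_1\otimes e_1$ is a highest weight vector of $V(2\varpi_1)$, whose projective stabilizer is $P_{\chi_1}$; hence $\overline{G\cdot[w]}=G/P_{\chi_1}$ is the image of $\mathbb{P}^m=\mathbb{P}(\Cbb^{m+1})$ under the $2$-uple Veronese embedding, and $[w]$ corresponds to the line $\Cbb e_1$. The vector $\eta_\lambda$ is decomposable, so $[\eta_\lambda]$ is a point of the Grassmannian $\mathbb{G}(i;m+1)\subseteq\mathbb{P}(\wedge^i\Cbb^{m+1})$ (Plücker embedding), corresponding to the $i$-plane $U_\lambda=\langle e_1+\lambda(i+2)e_{i+1},\,e_2,\dots,e_i\rangle$; and since $G$ acts transitively on $\mathbb{G}(i;m+1)$, one has $\overline{G\cdot[\eta_\lambda]}=\mathbb{G}(i;m+1)$. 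Thus the previous reduction identifies $X_\lambda\cong\overline{G\cdot(\Cbb e_1,\,U_\lambda)}$ inside $\mathbb{P}^m\times\mathbb{G}(i;m+1)$.

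It then remains to show that this orbit is dense. Here I would use that $\SL_{m+1}$ has exactly two orbits on $\mathbb{P}^m\times\mathbb{G}(i;m+1)$, distinguished by whether the line $L$ is contained in the $i$-plane $U$ (the closed orbit, where $\dim(L\cap U)=1$) or not (the open orbit, where $L\cap U=0$): transitivity on each stratum follows by completing bases, and since $\mathbb{P}^m\times\mathbb{G}(i;m+1)$ is irreducible of dimension $m+i(m+1-i)$ equal to that of the non-incident stratum, the latter is open dense. The one genuine computation is to check the incidence for our point: modulo $U_\lambda$ one has $e_1\equiv-\lambda(i+2)e_{i+1}$, which is nonzero for $\lambda\neq0$, so $e_1\notin U_\lambda$ and $(\Cbb e_1,U_\lambda)$ lies in the open orbit. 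Its closure is therefore all of $\mathbb{P}^m\times\mathbb{G}(i;m+1)$, and pulling back through the Segre isomorphism gives $X_\lambda\cong\mathbb{P}^m\times\mathbb{G}(i;m+1)$. The only delicate point is the description of this two-orbit structure together with the verification that $e_1\notin U_\lambda$ for all $\lambda\neq0$; everything else is formal once $z_\lambda$ is recognized as a pure tensor.
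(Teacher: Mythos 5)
Your proof is correct and takes essentially the same route as the paper's: both recognize $z_\lambda$ as a pure tensor, so that $X_\lambda$ lies in the Segre-embedded $\mathbb{P}^m\times\mathbb{G}(i;m+1)$, and both conclude from the two-orbit structure of the $G$-action on pairs $(L,U)$ (incident versus non-incident) that the point lies in the open orbit, whose closure is the whole product. Your explicit verification that $e_1\notin U_\lambda$ for $\lambda\neq 0$ simply spells out the paper's assertion that $z_\lambda$ belongs to the open orbit.
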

	
	\begin{proof}
	Note that $\mathbb{P}^m\times\mathbb{G}(i;m+1)$ can be embedded in $\mathbb{P}(V(2\varpi_1)\otimes V(\varpi_i))$ as the $G\times G$-orbit of $z_\lambda$. It implies that, for any $\lambda\in\Cbb^*$, $X_\lambda$ is isomorphic to a subvariety of $\mathbb{P}^m\times\mathbb{G}(i;m+1)$. But $G$ acts on $\mathbb{P}^m\times\mathbb{G}(i;m+1)$ with two orbits: a closed one consisting of couples $(l,V_i)$ such that $l\subset V_i$, and an open one consisting of couples $(l,V_i)$ such that $l\not\subset V_i$. We conclude by the fact that $z_\lambda$ is in the open orbit.
	\end{proof}
	
	\begin{prop}
	The variety $\mathbb{X}$ admits a deformation into $\mathbb{P}^m\times\mathbb{G}(i;m+1)$.
	\end{prop}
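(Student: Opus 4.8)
The plan is to exhibit an explicit one-parameter family $\mathfrak{X}_\lambda$, $\lambda\in\Cbb$, inside a fixed projective space whose fibre over $\lambda=0$ is $\mathbb{X}$ and whose fibres over $\lambda\neq 0$ are all isomorphic to $\mathbb{P}^m\times\mathbb{G}(i;m+1)$, exactly mirroring the Hirzebruch construction of Section~\ref{Hirzebruch}. I would set $\xi:=-\frac{i}{2}v_1-\sum_{j=2}^i(-1)^{j-1}v_j$, which lies in $V(2\varpi_1+\varpi_i)$ by the previous lemma, and define for every $\lambda\in\Cbb$ the vector
$$x_\lambda:=v_0+v_{2\varpi_1+\varpi_i}+\lambda\xi\ \in\ V(\varpi_1+\varpi_{i+1})\oplus V(2\varpi_1+\varpi_i),$$
together with $\mathfrak{X}_\lambda:=\overline{G\cdot[x_\lambda]}\subset\mathbb{P}\!\left(V(\varpi_1+\varpi_{i+1})\oplus V(2\varpi_1+\varpi_i)\right)$.

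The next step is to identify the two ends of the family from the computations already made. For $\lambda=0$ one has $x_0=v_0+v_{2\varpi_1+\varpi_i}$, which is precisely the base point defining $\mathbb{X}$, so $\mathfrak{X}_0=\mathbb{X}$. For $\lambda\neq 0$ I would compare $x_\lambda$ with $z_\lambda$: splitting into the two summands, the $V(\varpi_1+\varpi_{i+1})$-component of $z_\lambda$ is $\lambda v_0$ while its $V(2\varpi_1+\varpi_i)$-component is $v_{2\varpi_1+\varpi_i}+\lambda\xi$. Letting $A_\lambda$ be the $G$-module automorphism that rescales the summand $V(\varpi_1+\varpi_{i+1})$ by $1/\lambda$ and fixes $V(2\varpi_1+\varpi_i)$, one gets $A_\lambda(z_\lambda)=x_\lambda$. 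Since $A_\lambda$ is $G$-equivariant, it maps $X_\lambda=\overline{G\cdot[z_\lambda]}$ isomorphically onto $\mathfrak{X}_\lambda$, and Lemma~\ref{lem:generalfiberI3} then yields $\mathfrak{X}_\lambda\cong X_\lambda\cong\mathbb{P}^m\times\mathbb{G}(i;m+1)$.

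It then remains to organise these fibres into a genuine flat deformation. I would take $\mathcal{X}$ to be the closure in $\mathbb{A}^1\times\mathbb{P}\!\left(V(\varpi_1+\varpi_{i+1})\oplus V(2\varpi_1+\varpi_i)\right)$ of the image of $(\lambda,g)\mapsto(\lambda,[g\cdot x_\lambda])$, with $\pi\colon\mathcal{X}\to\mathbb{A}^1$ the first projection. As $\mathcal{X}$ is integral and $\pi$ is dominant onto the smooth curve $\mathbb{A}^1$, the morphism $\pi$ is automatically flat; its fibres over $\lambda\neq0$ are the $\mathfrak{X}_\lambda\cong\mathbb{P}^m\times\mathbb{G}(i;m+1)$, and the fibre over $0$ contains $\mathfrak{X}_0=\mathbb{X}$.

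The main obstacle is to confirm that the scheme-theoretic special fibre $\pi^{-1}(0)$ is exactly $\mathbb{X}$ — reduced and without extra or embedded components — rather than some larger degeneration. I would settle this by a dimension and Hilbert-polynomial count: flatness forces $\pi^{-1}(0)$ to share the Hilbert polynomial of the general fibres, and $\mathbb{X}$, being a rank-one horospherical variety whose open orbit has the same dimension as $\mathbb{P}^m\times\mathbb{G}(i;m+1)$, is a full-dimensional irreducible component of $\pi^{-1}(0)$. Equivalently, and more conceptually, one recognises the whole family as the Alexeev--Brion/Knop horospherical degeneration of the spherical (non-horospherical) $G$-variety $\mathbb{P}^m\times\mathbb{G}(i;m+1)$ under the diagonal action — the two orbits being the couples $(l,V_i)$ with $l\subset V_i$ and those with $l\not\subset V_i$ — whose horospherical limit is flat and is precisely $\mathbb{X}$. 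This identification is the step that requires care, and I expect it to rely on the general degeneration machinery rather than on the explicit formulas alone.
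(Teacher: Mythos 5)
Your proposal is correct and takes essentially the same route as the paper: your family $x_\lambda=v_0+v_{2\varpi_1+\varpi_i}+\lambda\xi$ is exactly the paper's $\mathfrak{X}_\lambda$ (your rescaling automorphism $A_\lambda$ is precisely the paper's identification of $v_{\varpi_1+\varpi_{i+1}}$ with $\lambda v_0$ inside $V(2\varpi_1)\otimes V(\varpi_i)$), and you invoke the same two lemmas to identify $\mathfrak{X}_0$ with $\mathbb{X}$ and $\mathfrak{X}_\lambda$, $\lambda\neq 0$, with $\mathbb{P}^m\times\mathbb{G}(i;m+1)$. Your final paragraph on flatness and on the scheme-theoretic special fibre (via the Alexeev--Brion/Knop degeneration) supplies care that the paper leaves implicit, but it does not change the argument.
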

	
	\begin{proof}
	With the notation above, we consider the family  $\mathfrak{X}\longrightarrow \mathbb{A}^1$ defined by 
	
	$$\mathfrak{X}_\lambda:=\overline{G\cdot[v_{\varpi_1+\varpi_{i+1}}+ v_{2\varpi_1+\varpi_i}-\frac{\lambda i}{2}v_1-\lambda\sum_{j=2}^i (-1)^{j-1}v_j]}\subset \Pbb(V(\varpi_1+\varpi_{i+1})\oplus V(2\varpi_1+\varpi_i)).$$
	In particlular, $\mathfrak{X}_0\simeq \mathbb{X}$.
	
	Let $\lambda\neq 0$. 
By identifying $V(\varpi_1+\varpi_{i+1})\oplus V(2\varpi_1+\varpi_i)$ with a submodule of  $V(2\varpi_1)\otimes V(\varpi_i)$ and $v_{\varpi_1+\varpi_{i+1}}$ with $\lambda v_0$ as in Lemma~\ref{lem:submoduleI3}, we get that $\mathfrak{X}_\lambda$ is isomorphic to $\mathbb{P}^m\times\mathbb{G}(i;m+1)$ by Lemma~\ref{lem:generalfiberI3}.
	\end{proof}

	\appendix
	\section{Study of cases in order to compute $H^1(\mathbb{X},T_\mathbb{X})$}
	
	We explain how to study all cases with minimal computations. In each case (including prime cases), we should compute $A$, $B$, $c_{\alpha_i}$ and $d_{\alpha_i}$ for $i=1$ (when we consider the orbit $Y$) and for $i=0$ (when we consider the orbit $Z$). We begin with several usuful remarks.

\begin{enumerate}

\item The condition of Theorem \ref{theo1} or Theorem \ref{theo2} saying that $\beta$ cannot be linked with another simple root than $\alpha_0$ and $\alpha_1'$, or  $\alpha_0'$ and $\alpha_1$, permits to extract less cases. Note that, when we have a non prime (resp. prime) case where $\beta$ is only linked with $\alpha_0$ and $\alpha_1'$, then the corresponding prime (resp. non prime) case is such that  $\beta$ is only linked with $\alpha_0'$ and $\alpha_1$, and conversely. 

For example : (I) (3)
\begin{dynkinDiagram}A{*.**.**}
\node[below,/Dynkin diagram/text style] at (root 5)
{\(\beta\)};
\node[below,/Dynkin diagram/text style] at (root 2)
{\(\alpha_0\)};
\node[below,/Dynkin diagram/text style] at (root 3)
{\(\alpha_1\)};
\node[above,/Dynkin diagram/text style] at (root 4)
{\(\alpha_1'\)};
\end{dynkinDiagram} could give some $H^1$ for $Y$, and then (I)(3)'
\begin{dynkinDiagram}A{*.**.**}
\node[below,/Dynkin diagram/text style] at (root 5)
{\(\beta\)};
\node[below,/Dynkin diagram/text style] at (root 2)
{\(\alpha_1\)};
\node[below,/Dynkin diagram/text style] at (root 3)
{\(\alpha_0\)};
\node[above,/Dynkin diagram/text style] at (root 4)
{\(\alpha_0'\)};
\end{dynkinDiagram} could give some $H^1$ for $Z$.
We notice that $c_{\alpha_1}$ and $d_{\alpha_1}$ in the non prime (resp. prime) case have the same values as $c_{\alpha_0}$ and $d_{\alpha_0}$ in the prime (resp. non prime) case; as well as $A$ and $B$ which are not changed with the double exchange prime/non prime and $Y/Z$. \\

So we will list cases where $\beta$ is only linked with $\alpha_0$ and $\alpha_1'$, and compute (when necessary) $A=-\langle \beta,\alpha_0^\vee\rangle$, $B=-\langle \beta,\alpha_1'^\vee\rangle$, $c_{\alpha_1}$ and $d_{\alpha_1}$. Then we will be able to say if $H^1(Y,N_{Y/X})\neq 0$ with $A$, $B$, $c_{\alpha_1}$, $d_{\alpha_1}$ in that case, and also if $H^1(Z,N_{Z/X})\neq 0$ in the prime or non prime corresponding other case. Recall that $c_{\alpha_1}$ and $d_{\alpha_1}$ are the coefficients in $\alpha_1^\vee$ of the biggest coroots (in $R^\vee$) with coefficents 1 in $\alpha_0^\vee$ and 0 in $\beta^\vee$, resp. 0 in $\alpha_0^\vee$ and 1 in $\beta^\vee$, in $R^\vee$. \\

\item Recall that if $A=B=0$, then $G_0$ has to be of type $A_1$ : we are in cases (XVIII) (from (1) to (8), prime and non prime), or in cases $G=G_0\times G_1\times G_2$. In these cases, $\lambda$ could be any positive integer, so that $a_1$ could be any integer $\geq 2$ (because $d_{\alpha_1}=0$). And we always have $H^1(Z,N_{Z/X})= 0$ in the prime or non prime corresponding other case because $d_{\alpha_0}=d_{\alpha_1}=0<2$.\\

As soon as $A$ or $B$ is non zero, the conditions of the theorem imply that $A$ and $B$ are at most 1, and $\lambda=1$. In particular, $a_1=2-d_{\alpha_1}$ for $Y$ and $a_1=d_{\alpha_0}-2$ for $Z$. since $A$ and $B$ are $\leq 1$, we do not have the cases where $G_0$ is of type $B_m$ and $\beta=\chi_{m-1}$, $G_0$ is of type $C_m$ and $\beta=\chi_{m}$, $G_0$ is of type $F_4$ and $\beta=\chi_{2}$, and $G_0$ is of type $G_2$ and $\beta=\chi_{2}$.

Notice also that $c_{\alpha_1}$ has to be $\leq 1$ when we consider $Y$, with equality if and only if $A=0$.
 (and $c_{\alpha_0}$ has to be $\leq 1$ when we consider $Z$ with equality if and only if $A=0$). In particular, in cases (I) to (VIII), where $\alpha_0$ and $\alpha_1$ are in the same connected component of $\Gamma_{S\backslash\{\beta\}}$, we need to have $A=0$ so that $\beta$ has to be extremal (and $\alpha_1'$ linked to $\beta$, for $Y$, and $\alpha_0'$ linked to $\beta$, for $Z$). \\

For all cases left, we have to check the conditions of the theorem.

Note that we can first consider the orbit $Y$ and compute $A$, $B$, $c_{\alpha_1}$ and $d_{\alpha_1}$. If $d_{\alpha_1}\leq 1$ then we have $H^1(Y,N_{Y/X})\neq 0$, if $d_{\alpha_1}\geq 3$, then we have $H^1(Z,N_{Z/X})\neq 0$ for the other prime or non prime case. And if $d_{\alpha_1}=2$, we are in the case where $a_1=0$, prime and non prime cases are the same, we only consider non prime cases, and precise if $H^1(Y,N_{Y/X})\neq 0$ (if non prime) or $H^1(Z,N_{Z/X})\neq 0$ (if prime).

We deal with the cases where $\alpha_0$ or $\alpha_1$ are imaginary, as if they were an extremal simple root of the groups $G_1$ or $G_2$ of types $A$ (and the other $\alpha$ is not in the same simple group). Indeed, the conventions $A=0$ and $c_{\alpha_1}=0$ if $\alpha_0$ imaginary, and $B=0$ and $c_{\alpha_1}=d_{\alpha_1}=0$ if $\alpha_1$ imaginary, give the same results as if, $\alpha_0$ or $\alpha_1$ respectively, is an extremal simple root of the groups $G_1$ or $G_2$ of types $A$.

\end{enumerate}

Let us give now the complete study of cases, by following all remarks above.\\

1.  List of cases, by using conditions :
\begin{enumerate}[(a)]
\item $\beta$ is linked at most with $\alpha_0$ and $\alpha_1'$ in the Dynkin diagram;
\item $A$ and $B$ are at most 1;
\item in cases  (I) to (VIII), $\beta$ is extremal and $\alpha_1'$ is linked to $\beta$;
\end{enumerate}
\begin{multicols}{2}
(I) $G_0$ of type $A_m$, $m\geq 3$ \\

(3) \begin{dynkinDiagram}A{*.**.**}
\node[below,/Dynkin diagram/text style] at (root 5)
{\(\beta\)};
\node[below,/Dynkin diagram/text style] at (root 2)
{\(\alpha_0\)};
\node[below,/Dynkin diagram/text style] at (root 3)
{\(\alpha_1\)};
\node[above,/Dynkin diagram/text style] at (root 4)
{\(\alpha_1'\)};
\end{dynkinDiagram}
~\\

(II) $G_0$ of type $B_m$, $m\geq 3$ \\

(3) \begin{dynkinDiagram}B{*.**.**}
\node[below,/Dynkin diagram/text style] at (root 5)
{\(\beta\)};
\node[below,/Dynkin diagram/text style] at (root 2)
{\(\alpha_0\)};
\node[below,/Dynkin diagram/text style] at (root 3)
{\(\alpha_1\)};
\node[above,/Dynkin diagram/text style] at (root 4)
{\(\alpha_1'\)};
\end{dynkinDiagram}
~\\

(5)' with $k=1$ \begin{dynkinDiagram}B{**.**}
\node[below,/Dynkin diagram/text style] at (root 1)
{\(\beta\)};
\node[below,/Dynkin diagram/text style] at (root 4)
{\(\alpha_0\)};
\node[below,/Dynkin diagram/text style] at (root 3)
{\(\alpha_1\)};
\node[above,/Dynkin diagram/text style] at (root 2)
{\(\alpha_1'\)};
\end{dynkinDiagram}
~\\

(III) $G_0$ of type $C_m$, $m\geq 3$ \\

(5)' with $k=1$ \begin{dynkinDiagram}C{**.**.**}
\node[below,/Dynkin diagram/text style] at (root 1)
{\(\beta\)};
\node[below,/Dynkin diagram/text style] at (root 4)
{\(\alpha_0\)};
\node[below,/Dynkin diagram/text style] at (root 3)
{\(\alpha_1\)};
\node[above,/Dynkin diagram/text style] at (root 2)
{\(\alpha_1'\)};
\end{dynkinDiagram}
~\\

(IV) $G_0$ of type $D_m$, $m\geq 4$ \\

(2)' \begin{dynkinDiagram}D{*.**.***}
\node[right,/Dynkin diagram/text style] at (root 6)
{\(\beta\)};
\node[right,/Dynkin diagram/text style] at (root 5)
{\(\alpha_0\)};
\node[below,/Dynkin diagram/text style] at (root 1)
{\(\alpha_1\)};
\node[right,/Dynkin diagram/text style] at (root 4)
{\(\alpha_1'\)};
\end{dynkinDiagram}
~\\

(5) with $k=1$
\begin{dynkinDiagram}D{**..***}
\node[below,/Dynkin diagram/text style] at (root 1)
{\(\beta\)};
\node[right,/Dynkin diagram/text style] at (root 5)
{\(\alpha_0\)};
\node[right,/Dynkin diagram/text style] at (root 4)
{\(\alpha_1\)};
\node[above,/Dynkin diagram/text style] at (root 2)
{\(\alpha_1'\)};
\end{dynkinDiagram}
~\\

(V) $G_0$ of type $E_6$ No case\\

(VI) $G_0$ of type $E_7$ No case\\

(VII) $G_0$ of type $E_8$ No case\\

(VIII) $G_0$ of type $F_4$ \\

 (2)' \begin{dynkinDiagram}F4
\node[below,/Dynkin diagram/text style] at (root 1)
{\(\beta\)};
\node[below,/Dynkin diagram/text style] at (root 3)
{\(\alpha_0\)};
\node[below,/Dynkin diagram/text style] at (root 2)
{\(\alpha_1\)};
\node[above,/Dynkin diagram/text style] at (root 2)
{\(\alpha_1'\)};
\end{dynkinDiagram}
~\\
 
(5)  \begin{dynkinDiagram}F4
\node[below,/Dynkin diagram/text style] at (root 4)
{\(\beta\)};
\node[below,/Dynkin diagram/text style] at (root 2)
{\(\alpha_0\)};
\node[below,/Dynkin diagram/text style] at (root 3)
{\(\alpha_1\)};
\node[above,/Dynkin diagram/text style] at (root 3)
{\(\alpha_1'\)};
\end{dynkinDiagram}
~\\

(6) \begin{dynkinDiagram}F4
\node[below,/Dynkin diagram/text style] at (root 4)
{\(\beta\)};
\node[below,/Dynkin diagram/text style] at (root 1)
{\(\alpha_0\)};
\node[below,/Dynkin diagram/text style] at (root 3)
{\(\alpha_1\)};
\node[above,/Dynkin diagram/text style] at (root 3)
{\(\alpha_1'\)};
\end{dynkinDiagram}
~\\

(IX) $G_0$ of type $A_m$, $m\geq 2$ \\

(1) \begin{dynkinDiagram}A{**.**}\node[below,/Dynkin diagram/text style] at (root 1)
{\(\beta\)};
\node[below,/Dynkin diagram/text style] at (root 2)
{\(\alpha_0\)};
\end{dynkinDiagram}
\begin{dynkinDiagram}A{**.**}
\node[below,/Dynkin diagram/text style] at (root 1)
{\(\alpha_1\)};
\end{dynkinDiagram} or $\alpha_1$ is imaginary.
~\\

(2)' \begin{dynkinDiagram}A{**.**}\node[below,/Dynkin diagram/text style] at (root 1)
{\(\beta\)};
\node[below,/Dynkin diagram/text style] at (root 4)
{\(\alpha_1\)};
\node[above,/Dynkin diagram/text style] at (root 2)
{\(\alpha_1'\)};
\end{dynkinDiagram}
\begin{dynkinDiagram}A{**.**}
\node[below,/Dynkin diagram/text style] at (root 1)
{\(\alpha_0\)};
\end{dynkinDiagram} or $\alpha_0$ is imaginary.
~\\

(5)', (7) with $k=2$, (12), (17)' \begin{dynkinDiagram}A{**.***.**}
\node[below,/Dynkin diagram/text style] at (root 4)
{\(\beta\)};
\node[below,/Dynkin diagram/text style] at (root 5)
{\(\alpha_0\)};
\node[below,/Dynkin diagram/text style] at (root 1)
{\(\alpha_1\)};
\node[above,/Dynkin diagram/text style] at (root 3)
{\(\alpha_1'\)};
\end{dynkinDiagram}
~\\

(X) $G_0$ of type $B_m$, $m\geq 2$ \\

(15)' \begin{dynkinDiagram}B{**.**}
\node[below,/Dynkin diagram/text style] at (root 4)
{\(\beta\)};
\node[below,/Dynkin diagram/text style] at (root 1)
{\(\alpha_1\)};
\node[above,/Dynkin diagram/text style] at (root 3)
{\(\alpha_1'\)};
\end{dynkinDiagram}
\begin{dynkinDiagram}A{**.**}
\node[below,/Dynkin diagram/text style] at (root 1)
{\(\alpha_0\)};
\end{dynkinDiagram} or $\alpha_0$ is imaginary.
~\\

(16) \begin{dynkinDiagram}B{**.**}
\node[below,/Dynkin diagram/text style] at (root 4)
{\(\beta\)};
\node[below,/Dynkin diagram/text style] at (root 3)
{\(\alpha_0\)};
\end{dynkinDiagram}
\begin{dynkinDiagram}A{**.**}
\node[below,/Dynkin diagram/text style] at (root 1)
{\(\alpha_1\)};
\end{dynkinDiagram} or $\alpha_1$ is imaginary.
~\\

(XI) $G_0$ of type $C_m$, $m\geq 2$ \\

(1) \begin{dynkinDiagram}C{**.**}
\node[below,/Dynkin diagram/text style] at (root 4)
{\(\beta\)};
\node[below,/Dynkin diagram/text style] at (root 3)
{\(\alpha_0\)};
\end{dynkinDiagram}
\begin{dynkinDiagram}A{**.**}
\node[below,/Dynkin diagram/text style] at (root 1)
{\(\alpha_1\)};
\end{dynkinDiagram} or $\alpha_1$ is imaginary.
~\\

(4)' \begin{dynkinDiagram}C{**.***.**}
\node[below,/Dynkin diagram/text style] at (root 4)
{\(\beta\)};
\node[below,/Dynkin diagram/text style] at (root 5)
{\(\alpha_0\)};
\node[below,/Dynkin diagram/text style] at (root 1)
{\(\alpha_1\)};
\node[above,/Dynkin diagram/text style] at (root 3)
{\(\alpha_1'\)};
\end{dynkinDiagram}
~\\

(7) with $k=m-1$ (and (4) with $m=3$) \begin{dynkinDiagram}C{**.***}
\node[below,/Dynkin diagram/text style] at (root 4)
{\(\beta\)};
\node[below,/Dynkin diagram/text style] at (root 3)
{\(\alpha_0\)};
\node[below,/Dynkin diagram/text style] at (root 5)
{\(\alpha_1\)};
\node[above,/Dynkin diagram/text style] at (root 5)
{\(\alpha_1'\)};
\end{dynkinDiagram}
~\\

(XII) $G_0$ of type $D_m$ No case\\

(XIII) $G_0$ of type $E_6$ No case\\

(XIV) $G_0$ of type $E_7$ No case\\

(XV) $G_0$ of type $E_8$ No case \\

(XVI)\\

(9)'
\begin{dynkinDiagram}F4
\node[below,/Dynkin diagram/text style] at (root 3)
{\(\beta\)};
\node[below,/Dynkin diagram/text style] at (root 4)
{\(\alpha_0\)};
\node[below,/Dynkin diagram/text style] at (root 1)
{\(\alpha_1\)};
\node[above,/Dynkin diagram/text style] at (root 2)
{\(\alpha_1'\)};
\end{dynkinDiagram}
~\\

(11) \begin{dynkinDiagram}F4
\node[below,/Dynkin diagram/text style] at (root 3)
{\(\beta\)};
\node[below,/Dynkin diagram/text style] at (root 2)
{\(\alpha_0\)};
\node[below,/Dynkin diagram/text style] at (root 4)
{\(\alpha_1\)};
\node[above,/Dynkin diagram/text style] at (root 4)
{\(\alpha_1'\)};
\end{dynkinDiagram}
~\\

(XVII)\\

(1) \begin{dynkinDiagram}[backwards = true]G2
\node[below,/Dynkin diagram/text style] at (root 2)
{\(\beta\)};
\node[below,/Dynkin diagram/text style] at (root 1)
{\(\alpha_0\)};
\end{dynkinDiagram}
\begin{dynkinDiagram}A{**.**}
\node[below,/Dynkin diagram/text style] at (root 1)
{\(\alpha_1\)};
\end{dynkinDiagram} or $\alpha_1$ is imaginary.
~\\

(1)' \begin{dynkinDiagram}[backwards = true]G2
\node[below,/Dynkin diagram/text style] at (root 2)
{\(\beta\)};
\node[below,/Dynkin diagram/text style] at (root 1)
{\(\alpha_1\)};
\end{dynkinDiagram}
\begin{dynkinDiagram}A{**.**}
\node[below,/Dynkin diagram/text style] at (root 1)
{\(\alpha_0\)};
\end{dynkinDiagram} or $\alpha_0$ is imaginary\\

\end{multicols}

~\\
This ends all cases where $A$ or $B$ is non zero.
We will consider the cases where $A=B=0$ at the end.\\

2. We give all needed values in the following table and give the value of $a_1$ such that $H^1(Y,N_{Y/X})\neq 0$ or $H^1(ZY,N_{Z/X})\neq 0$. We keep an empty box if there is no $a_1$ such that $H^1(Y,N_{Y/X})\neq 0$ or $H^1(Z,N_{Z/X})\neq 0$ (when $c_{\alpha_1}\geq 2$).

Note that the latter case comes from the computation in the case (XVII) (1)', and since $d_{\alpha_1}>2$ it gives a non zero $H^1$ for the orbit $Z$ in the non prime case.\\

\begin{tabular}{|c|c|c|c|c|c|}
\hline
Case & $A$ & $B$ & $c_{\alpha_1}$ & $d_{\alpha_1}$ & $a_1$ (for $\lambda=1$)\\

\hline

(I) (3) & 0 & 1 & 1 & 1 & 1 \\

\hline

(II) (3) & 0 & 1 & 1 & 2 & 0 for $Y$ \\

\hline

(II)  (5)' & 0 & 1 & 2 & 1 & \\

\hline

(III) (5)' & 0 & 1 & 1 & 1 & 1 \\

\hline

(IV) (2)' & 0 & 1 & 1 & 1 & 1\\

\hline

(IV) (5) with $k=1$ & 0 & 1 & 1 & 1 & 1 \\

\hline

(VIII) (2)'  & 0 & 1 & 2 & & \\

\hline

(VIII) (5)  & 0 & 1 & 1 & 1 & 1\\

\hline

(VIII) (6)  & 0 & 1 & 1 & 2 & 0 for $Y$\\

\hline

\end{tabular}

\begin{tabular}{|c|c|c|c|c|c|}
\hline
Case & $A$ & $B$ & $c_{\alpha_1}$ & $d_{\alpha_1}$ & $a_1$ (for $\lambda=1$)\\

\hline

(IX) (1)  & 1 & 0 & 0 & 0 & 2\\

\hline

(IX) (2)'  & 0 & 1  & 0 & 1 & 1\\

\hline

(IX) (5)'  & 1 &1   & 0 & 1 & 1\\

\hline

(IX) (7) with $k=2$ & 1 & 1  & 0 & 1 & 1\\

\hline

(IX) (12)  & 1 & 1  & 0 & 1 & 1\\

\hline

(IX) (17)'  & 1 & 1  & 0 & 1 & 1\\

\hline

(X) (15)  & 0 & 1  & 0 & 2 & 0 for $Z$\\

\hline

(X) (16)  & 1 &  0 & 0 & 0 & 2\\

\hline

(XI) (1)  & 1 &  0 & 0 & 0 & 2\\

\hline

(XI) (4)'  & 1 & 1  & 0 & 1 & 1\\

\hline

(XI) (4) with $m=3$  & 1 & 1 & 0 & 2 & 0 for $Y$\\

\hline

(XI) (7) with $k=m-1$  & 1 &  1 & 0 & 2 & 0 for $Y$\\

\hline

(XVI) (9) & 1 & 1 & 0 & 2 & 0  for $Z$\\

\hline

(XVI) (11) & 1 & 1 & 0 & 1 & 1\\

\hline

(XVII) (1) & 1 & 0 & 0 & 0 & 2 \\

\hline

(XVII) (1) & 0 & 1 & 0 & 3 & 1 for $Z$ \\

\hline 

\end{tabular}

~\\

Now we deal with the cases (XVIII) from (1) to (7), prime and non prime, with $G_0$ of type $A_1$ (and $\beta=\chi_1$). Notice that, in all cases, $d_{\alpha_1}=0$ (and $d_{\alpha_0}=0$). So we only have to check if $c_{\alpha_1}$ is less than 1 (and if $c_{\alpha_0}$ is less than 1 for the prime case). We notice that in types $A$ and $D$, the cases prime and non prime are the same by symmetry of the Dynkin Diagram; and in types $B$, $C$ and $F_4$, only the non prime case gives some non zero $H^1(Y,N_{Y/X})$ because, for prime cases we have $c_{\alpha_1}\geq 2$. Here are the cases where $H^1(Y,N_{Y/X})\neq 0$, for any $a_1\geq 2$.

\begin{multicols}{2}
(1) \begin{dynkinDiagram}A1
\node[below,/Dynkin diagram/text style] at (root 1)
{\(\beta\)};
\end{dynkinDiagram}
\begin{dynkinDiagram}A{**.**}
\node[below,/Dynkin diagram/text style] at (root 1)
{\(\alpha_0\)};
\node[below,/Dynkin diagram/text style] at (root 4)
{\(\alpha_1\)};
\end{dynkinDiagram}

(2) \begin{dynkinDiagram}A1
\node[below,/Dynkin diagram/text style] at (root 1)
{\(\beta\)};
\end{dynkinDiagram}
\begin{dynkinDiagram}A{**.**.**}
\node[below,/Dynkin diagram/text style] at (root 3)
{\(\alpha_0\)};
\node[below,/Dynkin diagram/text style] at (root 4)
{\(\alpha_1\)};
\end{dynkinDiagram}

(3) \begin{dynkinDiagram}A1
\node[below,/Dynkin diagram/text style] at (root 1)
{\(\beta\)};
\end{dynkinDiagram}
\begin{dynkinDiagram}B{**.**}
\node[below,/Dynkin diagram/text style] at (root 3)
{\(\alpha_0\)};
\node[below,/Dynkin diagram/text style] at (root 4)
{\(\alpha_1\)};

\end{dynkinDiagram}

(4) \begin{dynkinDiagram}A1
\node[below,/Dynkin diagram/text style] at (root 1)
{\(\beta\)};
\end{dynkinDiagram}
\begin{dynkinDiagram}B3
\node[below,/Dynkin diagram/text style] at (root 1)
{\(\alpha_0\)};
\node[below,/Dynkin diagram/text style] at (root 3)
{\(\alpha_1\)};
\end{dynkinDiagram}

(5) \begin{dynkinDiagram}A1
\node[below,/Dynkin diagram/text style] at (root 1)
{\(\beta\)};
\end{dynkinDiagram}
\begin{dynkinDiagram}C{**.**.**}
\node[below,/Dynkin diagram/text style] at (root 4)
{\(\alpha_0\)};
\node[below,/Dynkin diagram/text style] at (root 3)
{\(\alpha_1\)};
\end{dynkinDiagram}

(6) \begin{dynkinDiagram}A1
\node[below,/Dynkin diagram/text style] at (root 1)
{\(\beta\)};
\end{dynkinDiagram}
\begin{dynkinDiagram}D{**.***}
\node[right,/Dynkin diagram/text style] at (root 4)
{\(\alpha_0\)};
\node[right,/Dynkin diagram/text style] at (root 5)
{\(\alpha_1\)};
\end{dynkinDiagram}

(7) \begin{dynkinDiagram}A1
\node[below,/Dynkin diagram/text style] at (root 1)
{\(\beta\)};
\end{dynkinDiagram}
\begin{dynkinDiagram}F4
\node[below,/Dynkin diagram/text style] at (root 2)
{\(\alpha_0\)};
\node[below,/Dynkin diagram/text style] at (root 3)
{\(\alpha_1\)};
\end{dynkinDiagram}
\end{multicols}

Note that case (XVIII) (8) is directly detailed in Section~\ref{sec2}.\\

The only case left is when $G=G_0\times G_1\times G_2$, $G_0$ is of type $A_1$ and $a_1\geq 2$ (including the cases where $\alpha_0$ or/and $\alpha_1$ are imaginary): 
\begin{dynkinDiagram}[backwards = true]A1
\node[below,/Dynkin diagram/text style] at (root 1)
{\(\beta\)};
\end{dynkinDiagram}
\begin{dynkinDiagram}A{**.**}
\node[below,/Dynkin diagram/text style] at (root 1)
{\(\alpha_0\)};
\end{dynkinDiagram}
\begin{dynkinDiagram}A{**.**}
\node[below,/Dynkin diagram/text style] at (root 1)
{\(\alpha_1\)};
\end{dynkinDiagram} \\

\newpage
\bibliographystyle{amsalpha}
\bibliography{biblio.bib}

\end{document}